\newtheorem{thm}{Theorem}
\newtheorem{cor}[thm]{Corollary}
\newtheorem{prop}[thm]{Proposition}
\newtheorem{algorithm}[thm]{Algorithm}
\numberwithin{equation}{section}
\DeclareMathAlphabet{\mathsfsl}{OT1}{cmss}{m}{sl}
\newcommand{\lang}{\textit}
\newcommand{\term}{\emph}
\newenvironment{inputs}%
	{\makebox{\phantom{}} \\ %
		\textsc{Input:}\begin{itemize}}%
	{\end{itemize}}
\newenvironment{outputs}%
	{\textsc{Output:}\begin{itemize}}%
	{\end{itemize}}
\newenvironment{procedure}%
	{\textsc{Procedure:}\begin{enumerate}}%
	{\end{enumerate}}
\renewcommand{\phi}{\varphi}
\newcommand{\defby}{\overset{\mathrm{\scriptscriptstyle{def}}}{=}}
\newcommand{\half}{\tfrac{1}{2}}
\newcommand{\onevct}{\mathbf{e}}
\newcommand{\Id}{\mathbf{I}}
\newcommand{\coll}[1]{\mathscr{#1}}
\newcommand{\subsp}[1]{\mathcal{#1}}
\newcommand{\Sspace}[1]{\mathbb{S}^{#1}}
\newcommand{\Rspace}[1]{\mathbb{R}^{#1}}
\newcommand{\Cspace}[1]{\mathbb{C}^{#1}}
\newcommand{\RPspace}[1]{\mathbb{P}^{#1}(\mathbb{R})}
\newcommand{\CPspace}[1]{\mathbb{P}^{#1}(\mathbb{C})}
\newcommand{\FPspace}[1]{\mathbb{P}^{#1}(\mathbb{F})}
\newcommand{\RGspace}[2]{\mathbb{G}( {#1}, \Rspace{#2} )}
\newcommand{\CGspace}[2]{\mathbb{G}( {#1}, \Cspace{#2} )}
\newcommand{\FGspace}[2]{\mathbb{G}( {#1}, \mathbb{F}^{#2} )}
\newcommand{\grp}[1]{\mathbb{#1}}
\newcommand{\abs}[1]{\left\vert {#1} \right\vert}
\newcommand{\vct}[1]{\bm{#1}}
\newcommand{\mtx}[1]{\bm{#1}}
\newcommand{\adj}{*}
\newcommand{\range}{\operatorname{range}}
\newcommand{\rank}{\operatorname{rank}}
\newcommand{\diag}{\operatorname{diag}}
\newcommand{\trace}{\operatorname{trace}}
\newcommand{\psd}{\succcurlyeq}
\newcommand{\ip}[2]{\left\langle {#1},\ {#2} \right\rangle}
\newcommand{\norm}[1]{\left\Vert {#1} \right\Vert}
\newcommand{\smnorm}[2]{{\bigl\Vert {#2} \bigr\Vert}_{#1}}
\newcommand{\enorm}[1]{\norm{#1}_2}
\newcommand{\enormsq}[1]{\enorm{#1}^2}
\newcommand{\fnorm}[1]{\norm{#1}_{\mathrm{F}}}
\newcommand{\fnormsq}[1]{\fnorm{#1}^2}
\newcommand{\pnorm}[2]{\norm{#2}_{#1}}
\newcommand{\dist}{\operatorname{dist}}
\newcommand{\pack}{\operatorname{pack}}
\newcommand{\subjto}{\quad\text{subject to}\quad}
\newcommand{\chord}{\mathrm{chord}}
\newcommand{\spec}{\mathrm{spec}}
\newcommand{\fs}{\mathrm{FS}}
\newcommand{\geod}{\mathrm{geo}}
\begin{document}

\title[Constructing Grassmannian Packings]{Constructing Packings in Grassmannian Manifolds \\ via Alternating Projection}

\author[Dhillon et al.]{I.\ S.\ Dhillon, R.\ W.\ Heath Jr., T.\ Strohmer, and J.\ A.\ Tropp}%

\thanks{{\it E-mail.} \texttt{inderjit@cs.utexas.edu}, \texttt{rheath@ece.utexas.edu}, \texttt{strohmer@ucdavis.edu}, \texttt{jtropp@acm.caltech.edu}}

\thanks{{\it Addresses.} ISD is with the Department of Computer Sciences, University of Texas, Austin, TX 78712.  RWH is with the Department of Electrical and Computer Engineering, University of Texas, Austin, TX 78712.  TS is with the Department of Mathematics, University of California, Davis, CA 95616.  JAT is currently with Applied and Computational Mathematics, California Institute of Technology, Pasadena, CA 91125.}

\thanks{{\it Acknowledgments.} ISD was supported by NSF grant CCF-0431257, NSF Career Award ACI-0093404, and NSF-ITR award IIS-0325116.  RWH was supported by NSF CCF Grant \#514194.  TS was supported by NSF DMS Grant \#0511461.  JAT was supported by an NSF Graduate Fellowship, a J.\ T.\ Oden Visiting Faculty Fellowship, and NSF DMS 0503299.}

\date{May 2004.  Revised November 2006 and August 2007.}

\begin{abstract}
This paper describes a numerical method for finding good packings in Grassmannian manifolds equipped with various metrics.  This investigation also encompasses packing in projective spaces.  In each case, producing a good packing is equivalent to constructing a matrix that has certain structural and spectral properties.  By alternately enforcing the structural condition and then the spectral condition, it is often possible to reach a matrix that satisfies both.  One may then extract a packing from this matrix.

This approach is both powerful and versatile.  In cases where experiments have been performed, the alternating projection method yields packings that compete with the best packings recorded.  It also extends to problems that have not been studied numerically.  For example, it can be used to produce packings of subspaces in real and complex Grassmannian spaces equipped with the Fubini--Study distance; these packings are valuable in wireless communications.  One can prove that some of the novel configurations constructed by the algorithm have packing diameters that are nearly optimal.
\end{abstract}

\keywords{Combinatorial optimization, packing, projective spaces, Grassmannian spaces, Tammes' Problem}

\subjclass[2000]{Primary: 51N15, 52C17}

\maketitle


\section{Introduction}

Let us begin with the standard facetious example.  Imagine that several mutually inimical nations build their capital cities on the surface of a featureless globe.  Being concerned about missile strikes, they wish to locate the closest pair of cities as far apart as possible.  In other words, what is the best way to pack points on the surface of a two-dimensional sphere?

This question, first discussed by the Dutch biologist Tammes \cite{Tam30:Origin-Number}, is the prototypical example of packing in a compact metric space.  It has been studied in detail for the last 75 years.  More recently, researchers have started to ask about packings in other compact spaces.  In particular, several communities have investigated how to arrange subspaces in a Euclidean space so that they are as distinct as possible.  An equivalent formulation is to find the best packings of points in a Grassmannian manifold.  This problem has applications in quantum computing and wireless communications.  There has been theoretical interest in subspace packing since the 1960s \cite{Tot65:Distribution-Points}, but the first detailed numerical study appears in a 1996 paper of Conway, Hardin, and Sloane \cite{CHS96:Packing-Lines}. 

The aim of this paper is to describe a flexible numerical method that can be used to construct packings in Grassmannian manifolds equipped with several different metrics.  The rest of this introduction provides a formal statement of abstract packing problems, and it offers an overview of our approach to solving them.

\subsection{Abstract Packing Problems}

Although we will be working with Grassmannian manifolds, it is more instructive to introduce packing problems in an abstract setting.  Let $\mathbb{M}$ be a compact metric space endowed with the distance function $\dist_{\mathbb{M}}$.  The \term{packing diameter} of a finite subset $\coll{X}$ is the minimum distance between some pair of distinct points drawn from $\coll{X}$.  That is,
$$
\pack_{\mathbb{M}}(\coll{X}) \defby
\min_{m \neq n} \dist_{\mathbb{M}}( x_m, x_n ).
$$
In other words, the packing diameter of a set is the diameter of the largest open ball that can be centered at each point of the set without encompassing any other point.  (It is also common to study the \term{packing radius}, which is half the diameter of this ball.)  An optimal packing of $N$ points is an ensemble $\coll{X}$ that solves the mathematical program
$$
\max_{\abs{\coll{X}} = N} \pack_{\mathbb{M}}(\coll{X})
$$
where $\abs{\cdot}$ returns the cardinality of a finite set.  The optimal packing problem is guaranteed to have a solution because the metric space is compact and the objective is a continuous function of the ensemble $\coll{X}$.

This article focuses on a \term{feasibility problem} closely connected with optimal packing.  Given a number $\rho$, the goal is to produce a set of $N$ points for which
\begin{equation} \label{eqn:feasibility}
\pack_{\mathbb{M}}(\coll{X}) \geq \rho.
\end{equation}
This problem is notoriously difficult to solve because it is highly nonconvex, and it is even more difficult to determine the maximum value of $\rho$ for which the feasibility problem is soluble.  This maximum value of $\rho$ corresponds with the diameter of an optimal packing.

\subsection{Alternating Projection}

We will attempt to solve the feasibility problem \eqref{eqn:feasibility} in Grassmannian manifolds equipped with a number of different metrics, but the same basic algorithm applies in each case.  Here is a high-level description of our approach.

First, we show that each configuration of subspaces is associated with a block Gram matrix whose blocks control the distances between pairs of subspaces.  Then we prove that a configuration solves the feasibility problem \eqref{eqn:feasibility} if and only if its Gram matrix possesses both a structural property and a spectral property.  The overall algorithm consists of the following steps.

\begin{enumerate}
\item	Choose an initial configuration and construct its matrix.

\item	Alternately enforce the structural condition and the spectral condition in hope of reaching a matrix that satisfies both.

\item	Extract a configuration of subspaces from the output matrix.
\end{enumerate}

In our work, we choose the initial configuration randomly and then remove similar subspaces from it with a simple algorithm.  One can imagine more sophisticated approaches to constructing the initial configuration.

Flexibility and ease of implementation are the major advantages of alternating projection.  This article demonstrates that appropriate modifications of this basic technique allow us to construct solutions to the feasibility problem in Grassmannian manifolds equipped with various metrics.  Some of these problems have never been studied numerically, and the experiments point toward intriguing phenomena that deserve theoretical attention.  Moreover, we believe that the possibilities of this method have not been exhausted and that it will see other applications in the future.

Alternating projection does have several drawbacks.  It may converge very slowly, and it does not always yield a high level of numerical precision.  In addition, it may not deliver good packings when the ambient dimension or the number of subspaces in the configuration is large.

\subsection{Motivation and Related Work}

This work was motivated by applications in electrical engineering.  In particular, subspace packings solve certain extremal problems that arise in multiple-antenna communication systems \cite{ZheTse:Communication-on-the-Grassmann-manifold::02,HocMarRic:Systematic-design-of-unitary:00,LovHeaSan:What-is-the-value-of-limited:04}.  This application requires complex Grassmannian packings that consist of a small number of subspaces in an ambient space of low dimension.  Our algorithm is quite effective in this parameter regime.  The resulting packings fill a significant gap in the literature, since existing tables consider only the real case \cite{Slo:Grassmannian-Web}.  See Section \ref{sec:discussion} for additional discussion of the wireless application.

The approach to packing via alternating projection was discussed in a previous publication \cite{TDHS05:Designing-Structured}, but the experiments were limited to a single case.  We are aware of several other numerical methods that can be used to construct packings in Grassmannian manifolds \cite{CHS96:Packing-Lines,Tro99:Approximate-Maximin,ARU01:Multiple-Antenna-Signal}.  These techniques rely on ideas from nonlinear programming.


\subsection{Historical Interlude}

The problem of constructing optimal packings in various metric spaces has a long and lovely history.  The most famous example may be Kepler's Conjecture that an optimal packing of spheres in three-dimensional Euclidean space%
\footnote{The infinite extent of a Euclidean space necessitates a more subtle definition of an optimal packing.}
locates them at the points of a face-centered cubic lattice.  For millennia, greengrocers have applied this theorem when stacking oranges, but it has only been established rigorously within the last few years \cite{Hal04:Proof-Kepler}.  Packing problems play a major role in modern communications because error-correcting codes may be interpreted as packings in the Hamming space of binary strings \cite{CT91:Elements-Information}.  The standard reference on packing is the \lang{magnum opus} of Conway and Sloane \cite{CS98:Sphere-Packing}.  Classical monographs on the subject were written by L.\ Fejes T\'oth \cite{FT64:Regular-Figures} and C.\ A.\ Rogers \cite{Rog64:Packing-Covering}.

The idea of applying alternating projection to feasibility problems first appeared in the work of von Neumann \cite{vN50:Functional-Operators}.  He proved that an alternating projection between two closed subspaces of a Hilbert space converges to the orthogonal projection of the initial iterate onto the intersection of the two subspaces.  Cheney and Goldstein subsequently showed that an alternating projection between two closed, convex subsets of a Hilbert space always converges to a point in their intersection (provided that the intersection is nonempty) \cite{CG59:Proximity-Maps}.  This result does not apply in our setting because one of the constraint sets we define is not convex.


\subsection{Outline of Article}

Here is a brief overview of this article.  In Section \ref{sec:grass}, we develop a basic description of Grassmannian manifolds and present some natural metrics.  Section \ref{sec:alt-proj} explains why alternating projection is a natural algorithm for producing Grassmannian packings, and it outlines how to apply this algorithm for one specific metric.  Section \ref{sec:rankin} gives some theoretical upper bounds on the optimal diameter of packings in Grassmannian manifolds.  Section \ref{sec:exp} describes the outcomes of an extensive set of numerical experiments and explains how to apply the algorithm to other metrics.  Section \ref{sec:open} offers some discussion and conclusions.  Appendix \ref{app:tammes} explores how our methodology applies to Tammes' Problem of packing on the surface of a sphere.  Finally, Appendix \ref{app:tables} contains tables and figures that detail the experimental results.

\section{Packing in Grassmannian Manifolds} \label{sec:grass}

This section introduces our notation and a simple description of the Grassmannian manifold.  It presents several natural metrics on the manifold, and it shows how to represent a configuration of subspaces in matrix form.

\subsection{Preliminaries}

We work in the vector space $\Cspace{d}$.  The symbol ${}^*$ denotes the complex-conjugate transpose of a vector (or matrix).  We equip the vector space with its usual inner product $\ip{ \vct{x} }{ \vct{y} } = \vct{y}^\adj \vct{x}$.  This inner product generates the $\ell_2$ norm via the formula $\enormsq{ \vct{x} } = \ip{ \vct{x} }{ \vct{x} }$.

The $d$-dimensional identity matrix is $\Id_d$; we sometimes omit the subscript if it is unnecessary.  A square matrix is \term{positive semidefinite} when its eigenvalues are all nonnegative.  We write $\mtx{X} \psd \mtx{0}$ to indicate that $\mtx{X}$ is positive semidefinite.

A square, complex matrix $\mtx{U}$ is \term{unitary} if it satisfies $\mtx{U}^\adj \mtx{U} = \Id$.  If in addition the entries of $\mtx{U}$ are real, the matrix is \term{orthogonal}.  The unitary group $\grp{U}(d)$ can be presented as the collection of all $d \times d$ unitary matrices with ordinary matrix multiplication.  The real orthogonal group $\grp{O}(d)$ can be presented as the collection of all $d \times d$ real orthogonal matrices with the usual matrix multiplication.

Suppose that $\mtx{X}$ is a general matrix.  The Frobenius norm is calculated as $\fnormsq{ \mtx{X} } = \trace{ \mtx{X}^\adj \mtx{X} }$, where the trace operator sums the diagonal entries of the matrix.  The spectral norm is denoted by $\pnorm{2,2}{ \mtx{X} }$; it returns the largest singular value of $\mtx{X}$.  Both these norms are unitarily invariant, which means that $\norm{ \mtx{UXV}^\adj } = \norm{ \mtx{X} }$ whenever $\mtx{U}$ and $\mtx{V}$ are unitary.



%

\subsection{Grassmannian Manifolds}

The (complex) Grassmannian manifold $\CGspace{K}{d}$ is the collection of all $K$-dimensional subspaces of $\Cspace{d}$.  This space is isomorphic to a quotient of unitary groups:
$$
\CGspace{K}{d} \cong
\frac{ \grp{U}(d) }{ \grp{U}(K) \times \grp{U}(d - K) }.
$$
To understand the equivalence, note that each orthonormal basis from $\Cspace{d}$ can be split into $K$ vectors, which span a $K$-dimensional subspace, and $d - K$ vectors, which span the orthogonal complement of that subspace.  To obtain a unique representation for the subspace, it is necessary to divide by isometries that fix the subspace and by isometries that fix its complement.  It is evident that $\CGspace{K}{d}$ is always isomorphic to $\CGspace{d - K}{d}$.

Similarly, the real Grassmannian manifold $\RGspace{K}{d}$ is the collection of all $K$-dimensional subspaces of $\Rspace{d}$.  This space is isomorphic to a quotient of orthogonal groups:
$$
\RGspace{K}{d} \cong
\frac{ \grp{O}(d) }{ \grp{O}(K) \times \grp{O}(d - K) }.
$$
If we need to refer to the real and complex Grassmannians simultaneously, we write $\FGspace{K}{d}$.

In the theoretical development, we concentrate on complex Grassmannians since the development for the real case is identical, except that all the matrices are real-valued instead of complex-valued.  A second reason for focusing on the complex case is that complex packings arise naturally in wireless communications \cite{LHS03:Grassmannian-Beamforming}.

When each subspace has dimension $K = 1$, the Grassmannian manifold reduces to a simpler object called a \term{projective space}.  The elements of a projective space can be viewed as lines through the origin of a Euclidean space.   The standard notation is $\FPspace{d-1} \defby \FGspace{1}{d}$.  We will spend a significant amount of attention on packings of this manifold.

\subsection{Principal Angles}

Suppose that $\subsp{S}$ and $\subsp{T}$ are two subspaces in $\CGspace{K}{d}$.  These subspaces are inclined against each other by $K$ different \term{principal angles}.  The smallest principal angle $\theta_1$ is the minimum angle formed by a pair of unit vectors $(\vct{s}_1, \vct{t}_1)$ drawn from $\subsp{S} \times \subsp{T}$.  That is,
$$
\theta_1 = \min_{(\vct{s}_1, \vct{t}_1) \in \subsp{S} \times \subsp{T}}
	\arccos \ip{ \vct{s}_1 }{ \vct{t}_1 }
\subjto
	\enorm{ \vct{s}_1 } = 1 \quad\text{and}\quad
	\enorm{ \vct{t}_1 } = 1.
$$
The second principal angle $\theta_2$ is defined as the smallest angle attained by a pair of unit vectors $(\vct{s}_2, \vct{t}_2)$ that is orthogonal to the first pair, i.e.,
\begin{align*}
\theta_2 = \min_{(\vct{s}_2, \vct{t}_2) \in \subsp{S} \times \subsp{T}}
	\arccos \ip{ \vct{s}_2 }{ \vct{t}_2 }
\subjto
	&\enorm{ \vct{s}_2 } = 1 \quad\text{and}\quad
	 \enorm{ \vct{t}_2 } = 1, \\
&\ip{ \vct{s}_1 }{ \vct{s}_2 } = 0
\quad\text{and}\quad
\ip{ \vct{t}_1 }{ \vct{t}_2 } = 0.
\end{align*}
The remaining principal angles are defined analogously.  The sequence of principal angles is nondecreasing, and it is contained in the range $[0, \pi/2]$.  We only consider metrics that are functions of the principal angles between two subspaces.

Let us present a more computational definition of the principal angles \cite{BG73:Numerical-Methods}.  Suppose that the columns of $\mtx{S}$ and $\mtx{T}$ form orthonormal bases for the subspaces $\subsp{S}$ and $\subsp{T}$.  More rigorously, $\mtx{S}$ is a $d \times K$ matrix that satisfies $\mtx{S}^\adj \mtx{S} = \Id_K$ and $\range{\mtx{S}} = \subsp{S}$.  The matrix $\mtx{T}$ has an analogous definition.  Next we compute a singular value decomposition of the product $\mtx{S}^\adj \mtx{T}$:
$$
\mtx{S}^\adj \mtx{T} = \mtx{U}\mtx{C}\mtx{V}^\adj
$$
where $\mtx{U}$ and $\mtx{V}$ are $K \times K$ unitary matrices and $\mtx{C}$ is a nonnegative, diagonal matrix with nonincreasing entries.  The matrix $\mtx{C}$ of singular values is uniquely determined, and its entries are the cosines of the principal angles between $\subsp{S}$ and $\subsp{T}$:
$$
c_{kk} = \cos \theta_k
\qquad	k = 1,2, \dots, K.
$$
This definition of the principal angles is most convenient numerically because singular value decompositions can be computed efficiently with standard software.  We also note that this definition of the principal angles does not depend on the choice of matrices $\mtx{S}$ and $\mtx{T}$ that represent the two subspaces.

\subsection{Metrics on Grassmannian Manifolds}

Grassmannian manifolds admit many interesting metrics, which lead to different packing problems.  This section describes some of these metrics.

\begin{enumerate}
\item	The \term{chordal distance} between two $K$-dimensional subspaces $\subsp{S}$ and $\subsp{T}$ is given by
\begin{align} \label{eqn:chordal-distance}
\dist_{\chord}(\subsp{S}, \subsp{T})
&\defby \sqrt{ \sin^2 \theta_1 + \dots + \sin^2 \theta_K } \notag \\
& = \left[ K - \fnormsq{\mtx{S}^\adj \mtx{T}} \right]^{1/2}.
\end{align}
The values of this metric range between zero and $\sqrt{K}$.  The chordal distance is the easiest to work with, and it also yields the most symmetric packings \cite{CHS96:Packing-Lines}.

\item	The \term{spectral distance} is
\begin{align} \label{eqn:spectral-distance}
\dist_{\spec}(\subsp{S}, \subsp{T})
&\defby \min\nolimits_k \sin \theta_k \notag \\
& = \left[ 1 - \pnorm{2,2}{\mtx{S}^\adj \mtx{T}}^2 \right]^{1/2}.
\end{align}
The values of this metric range between zero and one.  As we will see, this metric promotes a special type of packing called an \term{equi-isoclinic} configuration of subspaces.

\item	The \term{Fubini--Study distance} is
\begin{align} \label{eqn:fs-distance}
\dist_{\fs}(\subsp{S}, \subsp{T})
&\defby \arccos \left(\prod\nolimits_k \cos \theta_k \right) \notag \\
& = \arccos \abs{ \det \mtx{S}^\adj \mtx{T} }.
\end{align}
This metric takes values between zero and $\pi / 2$.  It plays an important role in wireless communications \cite{LovHea:Limited-feedback-unitary-stbc:05,LovHea:Limited-feedback-unitary:05}.


\item	The \term{geodesic distance} is
\begin{align*}
\dist_{\geod}(\subsp{S}, \subsp{T})
&\defby \sqrt{ \theta_1^2 + \dots + \theta_K^2 }.
\end{align*}
This metric takes values between zero and $\pi \sqrt{K}  / 2$.  From the point of view of differential geometry, the geodesic distance is very natural, but it does not seem to lead to very interesting packings \cite{CHS96:Packing-Lines}, so we will not discuss it any further.
\end{enumerate}

Grassmannian manifolds support several other interesting metrics, some of which are listed in \cite{BN02:Bounds-Packings}.  In case we are working in a projective space, i.e., $K = 1$, all of these metrics reduce to the acute angle between two lines or the sine thereof.  Therefore, the metrics are equivalent up to a monotonically increasing transformation, and they promote identical packings.

\subsection{Representing Configurations of Subspaces}

Suppose that $\coll{X} = \{ \mathcal{S}_1, \dots, \mathcal{S}_N \}$ is a collection of $N$ subspaces in $\CGspace{K}{d}$.  Let us develop a method for representing this configuration numerically.  To each subspace $\subsp{S}_n$, we associate a (nonunique) $d \times K$ matrix $\mtx{X}_n$ whose columns form an orthonormal basis for that subspace, i.e., $\mtx{X}_n^\adj \mtx{X}_n = \Id_K$ and $\range \mtx{X}_n = \subsp{S}_n$.  Now collate these $N$ matrices into a $d \times KN$ configuration matrix
$$
\mtx{X} \defby \begin{bmatrix}
\mtx{X}_1 & \mtx{X}_2 & \dots & \mtx{X}_N
\end{bmatrix}.
$$
In the sequel, we do not distinguish between the configuration $\coll{X}$ and the matrix $\mtx{X}$.

The \term{Gram matrix} of $\mtx{X}$ is defined as the $KN \times KN$ matrix $\mtx{G} = \mtx{X}^\adj \mtx{X}$.  By construction, the Gram matrix is positive semidefinite, and its rank does not exceed $d$.  It is best to regard the Gram matrix as an $N \times N$ block matrix comprised of $K \times K$ blocks, and we index it as such.  Observe that each block satisfies
$$
\mtx{G}_{mn} = \mtx{X}_m^\adj \mtx{X}_n.
$$
In particular, each diagonal block $\mtx{G}_{nn}$ is an identity matrix.  Meanwhile, the singular values of the off-diagonal block $\mtx{G}_{mn}$ equal the cosines of the principal angles between the two subspaces $\range{\mtx{X}_{m}}$ and $\range{\mtx{X}_{n}}$.

Conversely, let $\mtx{G}$ be an $N \times N$ block matrix with each block of size $K \times K$.  Suppose that the matrix is positive semidefinite, that its rank does not exceed $d$, and that its diagonal blocks are identity matrices.  Then we can factor $\mtx{G} = \mtx{X}^\adj \mtx{X}$ where $\mtx{X}$ is a $d \times KN$ configuration matrix.  That is, the columns of $\mtx{X}$ form orthogonal bases for $N$ different $K$-dimensional subspaces of $\Cspace{d}$.

As we will see, each metric on the Grassmannian manifold leads to a measure of ``magnitude'' for the off-diagonal blocks on the Gram matrix $\mtx{G}$.  A configuration solves the feasibility problem \eqref{eqn:feasibility} if and only if each off-diagonal block of its Gram matrix has sufficiently small magnitude.  So solving the feasibility problem is equivalent to producing a Gram matrix with appropriate properties.

\section{Alternating Projection for Chordal Distance} \label{sec:alt-proj}

In this section, we elaborate on the idea that solving the feasibility problem is equivalent with constructing a Gram matrix that meets certain conditions.  These conditions fall into two different categories: structural properties and spectral properties.  This observation leads naturally to an alternating projection algorithm for solving the feasibility problem.  The algorithm alternately enforces the structural properties and then the spectral properties in hope of producing a Gram matrix that satisfies them all.  This section illustrates how this approach unfolds when distances are measured with respect to the chordal metric.  In Section \ref{sec:exp}, we describe adaptations for other metrics.

\subsection{Packings with Chordal Distance}

Suppose that we seek a packing of $N$ subspaces in $\CGspace{K}{d}$ equipped with the chordal distance.  If $\mtx{X}$ is a configuration of $N$ subspaces, its packing diameter is
\begin{align*}
\pack_{\chord}(\mtx{X})
&\defby \min_{m \neq n} \dist_{\chord}(\mtx{X}_m, \mtx{X}_n) \\
& = \min_{m \neq n} \left[ K - \fnormsq{\mtx{X}_m^\adj \mtx{X}_n} \right]^{1/2}.
\end{align*}
Given a parameter $\rho$, the feasibility problem elicits a configuration $\mtx{X}$ that satisfies
$$
\min_{m \neq n} \left[ K - \fnormsq{ \mtx{X}_m^\adj \mtx{X}_n } \right]^{1/2}
	\geq \rho.
$$
We may rearrange this inequality to obtain a simpler condition:
\begin{equation} \label{eqn:chord-feas}
\max_{m \neq n} \fnorm{ \mtx{X}_m^\adj \mtx{X}_n }
	\leq \mu
\end{equation}
where
\begin{equation} \label{eqn:feas-parm}
\mu = \sqrt{K - \rho^2}.
\end{equation}

In fact, we may formulate the feasibility problem purely in terms of the Gram matrix.  Suppose that the configuration $\mtx{X}$ satisfies \eqref{eqn:chord-feas} with parameter $\mu$.  Then its Gram matrix $\mtx{G}$ must have the following six properties:
\begin{enumerate}
\item	$\mtx{G}$ is Hermitian.
\item	Each diagonal block of $\mtx{G}$ is an identity matrix.
\item	$\fnorm{ \mtx{G}_{mn} } \leq \mu$ for each $m \neq n$.
\item	$\mtx{G}$ is positive semidefinite.
\item	$\mtx{G}$ has rank $d$ or less.
\item	$\mtx{G}$ has trace $KN$.
\end{enumerate}
Some of these properties are redundant, but we have listed them separately for reasons soon to become apparent.  Conversely, suppose that a matrix $\mtx{G}$ satisfies Properties 1--6.  Then it is always possible to factor it to extract a configuration of $N$ subspaces that solves \eqref{eqn:chord-feas}.  The factorization of $\mtx{G} = \mtx{X}^\adj \mtx{X}$ can be obtained most easily from an eigenvalue decomposition of $\mtx{G}$.

\subsection{The Algorithm}

Observe that Properties 1--3 are \emph{structural} properties.  By this, we mean that they constrain the entries of the Gram matrix directly.  Properties 4--6, on the other hand, are \emph{spectral} properties.  That is, they control the eigenvalues of the matrix.  It is not easy to enforce structural and spectral properties simultaneously, so we must resort to half measures.  Starting from an initial matrix, our algorithm will alternately enforce Properties 1--3 and then Properties 4--6 in hope of reaching a matrix that satisfies all six properties at once.

To be more rigorous, let us define the structural constraint set
\begin{multline} \label{eqn:H-chord}
\coll{H}(\mu) \defby
\{ \mtx{H} \in \Cspace{KN \times KN} : \mtx{H} = \mtx{H}^\adj,
	\mtx{H}_{nn} = \Id_K \text{ for $n = 1, 2, \dots, N$}, \\
\text{and } \fnorm{ \mtx{H}_{mn} } \leq \mu \text{ for all $m \neq n$}
\}.
\end{multline}
Although the structural constraint set evidently depends on the parameter $\mu$, we will usually eliminate $\mu$ from the notation for simplicity.  We also define the spectral constraint set
\begin{equation} \label{eqn:G-chord}
\coll{G} \defby
\left\{ \mtx{G} \in \Cspace{KN \times KN} : \mtx{G} \psd \mtx{0},
	\rank{\mtx{G}} \leq d, \text{ and } \trace{\mtx{G}} = KN \right\}.
\end{equation}
Both constraint sets are closed and bounded, hence compact.  The structural constraint set $\coll{H}$ is convex, but the spectral constraint set is not.

To solve the feasibility problem \eqref{eqn:chord-feas}, we must find a matrix that lies in the intersection of $\coll{G}$ and $\coll{H}$.  This section states the algorithm, and the succeeding two sections provide some implementation details.

\begin{algorithm}[Alternating Projection] \label{alg:alt-proj}
\begin{inputs}
\item	A $KN \times KN$ Hermitian matrix $\mtx{G}^{(0)}$
\item	The maximum number of iterations $T$
\end{inputs}
\begin{outputs}
\item	A $KN \times KN$ matrix $\mtx{G}_{\mathrm{out}}$ that belongs to $\coll{G}$ and whose diagonal blocks are identity matrices
\end{outputs}
\begin{procedure}
\item	Initialize $t \leftarrow 0$.
\item	Determine a matrix $\mtx{H}^{(t)}$ that solves
$$
\min_{\mtx{H} \in \coll{H}} \smnorm{\mathrm{F}}{ \mtx{H} - \mtx{G}^{(t)} }.
$$
\item	Determine a matrix $\mtx{G}^{(t + 1)}$ that solves
$$
\min_{\mtx{G} \in \coll{G}} \smnorm{\mathrm{F}}{ \mtx{G} - \mtx{H}^{(t)} }.
$$
\item	Increment $t$.
\item	If $t < T$, return to Step 2.
\item	Define the block-diagonal matrix $\mtx{D} = \diag \mtx{G}^{(T)}$.
\item	Return the matrix
$$
\mtx{G}_{\mathrm{out}}
	= \mtx{D}^{-1/2} \mtx{G}^{(T)} \mtx{D}^{-1/2}.
$$
\end{procedure}
\end{algorithm}

The iterates generated by this algorithm are not guaranteed to converge in norm.  Therefore, we have chosen to halt the algorithm after a fixed number of steps instead of checking the behavior of the sequence of iterates.  We discuss the convergence properties of the algorithm in the sequel.

The scaling in the last step normalizes the diagonal blocks of the matrix but preserves its inertia (i.e., numbers of negative, zero, and positive eigenvalues).  Since $\mtx{G}^{(T)}$ is a positive-semidefinite matrix with rank $d$ or less, the output matrix $\mtx{G}_{\mathrm{out}}$ shares these traits.  It follows that the output matrix always admits a factorization $\mtx{G}_{\mathrm{out}} = \mtx{X}^\adj \mtx{X}$ where $\mtx{X}$ is a $d \times KN$ configuration matrix.  Property 3 is the only one of the six properties that may be violated.

\subsection{The Matrix Nearness Problems}

To implement Algorithm \ref{alg:alt-proj}, we must solve the matrix nearness problems in Steps 2 and 3.  The first one is straightforward.

\begin{prop}
Let $\mtx{G}$ be an Hermitian matrix.  With respect to the Frobenius norm, the unique matrix in $\coll{H}(\mu)$ nearest to $\mtx{G}$ has diagonal blocks equal to the identity and off-diagonal blocks that satisfy
$$
\mtx{H}_{mn} = \left\{
\begin{array}{ll}
\mtx{G}_{mn} & \qquad \text{if $\fnorm{\mtx{G}_{mn}} \leq \mu$, and} \\
\mu \, \mtx{G}_{mn} / \fnorm{\mtx{G}_{mn}} & \qquad \text{otherwise.}
\end{array}
\right.
$$
\end{prop}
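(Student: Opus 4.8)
The plan is to exploit the additive separability of the squared Frobenius norm across blocks, so that the single global nearness problem decouples into independent subproblems, one per block. Writing $\fnormsq{\mtx{H} - \mtx{G}} = \sum_{m,n} \fnormsq{\mtx{H}_{mn} - \mtx{G}_{mn}}$, I would note that the three defining conditions of $\coll{H}(\mu)$ act on disjoint groups of blocks: the identity condition pins down the diagonal blocks, the norm condition governs the magnitude of each off-diagonal block, and the Hermitian condition couples $\mtx{H}_{mn}$ to $\mtx{H}_{nm}$. Existence and uniqueness of the minimizer are automatic, since $\coll{H}(\mu)$ is a nonempty closed convex subset of the Hilbert space of matrices under the Frobenius inner product; the work is therefore to derive the explicit form.

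First I would dispose of the diagonal blocks. The constraint $\mtx{H}_{nn} = \Id_K$ leaves no freedom, so each diagonal block contributes the fixed quantity $\fnormsq{\Id_K - \mtx{G}_{nn}}$ independently of the remaining choices and is irrelevant to the minimization. This justifies the claim that the optimal diagonal blocks equal the identity.

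Next I would handle the off-diagonal blocks in conjugate pairs. Fixing $m < n$, the Hermitian condition forces $\mtx{H}_{nm} = \mtx{H}_{mn}^\adj$, so the only free variable is $\mtx{H}_{mn}$, subject to $\fnorm{\mtx{H}_{mn}} \leq \mu$. Because the Frobenius norm is invariant under conjugate transposition and $\mtx{G}$ is Hermitian (so $\mtx{G}_{nm} = \mtx{G}_{mn}^\adj$), the joint contribution of the pair collapses to $2\,\fnormsq{\mtx{H}_{mn} - \mtx{G}_{mn}}$, and the constraint $\fnorm{\mtx{H}_{nm}} \leq \mu$ coincides with $\fnorm{\mtx{H}_{mn}} \leq \mu$. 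Minimizing over the pair thus reduces to projecting the single block $\mtx{G}_{mn}$ onto the Frobenius ball of radius $\mu$. I would also verify that the resulting blocks satisfy $\mtx{H}_{nm} = \mtx{H}_{mn}^\adj$ automatically, so that the assembled matrix genuinely lies in $\coll{H}(\mu)$.

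Finally I would solve the ball-projection subproblem, the only substantive step. Treating $K \times K$ matrices as a Euclidean space under the Frobenius inner product, the projection of $\mtx{G}_{mn}$ onto $\{\mtx{A} : \fnorm{\mtx{A}} \leq \mu\}$ is the point itself when $\fnorm{\mtx{G}_{mn}} \leq \mu$, and otherwise the nearest point lies on the boundary along the ray through $\mtx{G}_{mn}$, namely $\mu\,\mtx{G}_{mn}/\fnorm{\mtx{G}_{mn}}$; this follows from a one-line convexity argument or by reducing to the scalar fact that the closest point to $\vct{v}$ in a radius-$\mu$ Euclidean ball is $\mu\,\vct{v}/\enorm{\vct{v}}$ when $\enorm{\vct{v}} > \mu$. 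There is no deep obstacle here — the proposition is essentially a projection onto a convex set — and the one point requiring care is the bookkeeping around the Hermitian coupling, ensuring that pairing the $(m,n)$ and $(n,m)$ blocks does not alter the per-block answer; once the conjugate-transpose invariance of the Frobenius norm is invoked, this reduces to routine verification.
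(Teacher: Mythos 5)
Your proposal is correct. The paper actually states this proposition without proof (it merely remarks that the first matrix nearness problem is ``straightforward''), and your argument---blockwise separability of the squared Frobenius norm, pinning the diagonal blocks, collapsing each Hermitian-conjugate pair of off-diagonal blocks via the conjugate-transpose invariance of $\fnorm{\cdot}$, and then projecting onto the Frobenius ball of radius $\mu$---is precisely the routine argument the authors had in mind, with the convexity of $\coll{H}(\mu)$ supplying existence and uniqueness.
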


It is rather more difficult to find a nearest matrix in the spectral constraint set.  To state the result, we define the plus operator by the rule $(x)_{+} = \max\{ 0, x \}$.

\begin{prop}
Let $\mtx{H}$ be an Hermitian matrix whose eigenvalue decomposition is $\sum_{j = 1}^{KN} \lambda_j \vct{u}_j \vct{u}_j^\adj$ with the eigenvalues arranged in nonincreasing order: $\lambda_1 \geq \lambda_2 \geq \dots \geq \lambda_{KN}$.  With respect to the Frobenius norm, a matrix in $\coll{G}$ closest to $\mtx{H}$ is given by
$$
\sum\nolimits_{j = 1}^d
(\lambda_j - \gamma)_{+} \vct{u}_j \vct{u}_j^\adj
$$
where the scalar $\gamma$ is chosen so that
$$
\sum\nolimits_{j = 1}^d (\lambda_j - \gamma)_{+} = KN.
$$
This best approximation is unique provided that $\lambda_d > \lambda_{d+1}$.
\end{prop}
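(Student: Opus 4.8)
The plan is to turn this constrained matrix approximation problem into a scalar optimization over eigenvalues, exploiting the unitary invariance of the Frobenius norm. First I would expand
\[
\fnormsq{\mtx{G} - \mtx{H}} = \fnormsq{\mtx{G}} - 2\,\real \trace(\mtx{H}\mtx{G}) + \fnormsq{\mtx{H}},
\]
and observe that since $\mtx{G}$ and $\mtx{H}$ are Hermitian the cross term is real and $\fnormsq{\mtx{H}}$ is constant. Minimizing the distance over $\coll{G}$ is therefore equivalent to minimizing $\fnormsq{\mtx{G}} - 2\trace(\mtx{H}\mtx{G})$. Existence of a minimizer is immediate because $\coll{G}$ is compact, so all the work lies in the characterization.

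Next I would decouple the choice of eigenvalues from the choice of eigenvectors of $\mtx{G}$. Writing the eigenvalues of $\mtx{G}$ as $\gamma_1 \geq \dots \geq \gamma_{KN} \geq 0$, the term $\fnormsq{\mtx{G}} = \sum_j \gamma_j^2$ depends only on the spectrum. To handle the cross term I would invoke the trace inequality for Hermitian matrices (von Neumann / Ky Fan): for a fixed spectrum of $\mtx{G}$ one has $\trace(\mtx{H}\mtx{G}) \leq \sum_j \lambda_j \gamma_j$, with equality when $\mtx{G}$ is diagonalized by the eigenvectors $\vct{u}_j$ of $\mtx{H}$ in the matching order. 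Hence an optimal $\mtx{G}$ shares the basis $\{\vct{u}_j\}$, and the objective collapses to $\sum_j (\gamma_j - \lambda_j)^2$ after adding and subtracting $\sum_j \lambda_j^2 = \fnormsq{\mtx{H}}$. The constraint $\rank \mtx{G} \leq d$ forces $\gamma_{d+1} = \dots = \gamma_{KN} = 0$, so the tail $\sum_{j>d}\lambda_j^2$ is fixed and it remains to minimize $\sum_{j=1}^d (\gamma_j - \lambda_j)^2$ over $\gamma_1, \dots, \gamma_d \geq 0$ subject to $\sum_{j=1}^d \gamma_j = KN$.

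This reduced problem is a strictly convex program on a nonempty compact convex set, so it has a unique solution characterized by its KKT conditions. Introducing a multiplier $\gamma$ for the equality constraint together with multipliers for the nonnegativity constraints, stationarity and complementary slackness yield $\gamma_j = (\lambda_j - \gamma)_+$, with $\gamma$ fixed by the normalization $\sum_{j=1}^d (\lambda_j - \gamma)_+ = KN$. I would then check that such a $\gamma$ exists: the map $\gamma \mapsto \sum_{j=1}^d (\lambda_j - \gamma)_+$ is continuous, nonincreasing, tends to $+\infty$ as $\gamma \to -\infty$, and vanishes for $\gamma \geq \lambda_1$, so it attains the value $KN$. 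Because $\lambda_1 \geq \dots \geq \lambda_d$, the resulting $\gamma_j$ are automatically nonincreasing, so the implicit ordering constraint is never active. Substituting back recovers the stated formula and confirms that it lies in $\coll{G}$.

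The main obstacle, and the point where the hypothesis $\lambda_d > \lambda_{d+1}$ is used, is uniqueness. Since the rank constraint makes $\coll{G}$ nonconvex, strict convexity of the distance does not by itself force a unique minimizer; the only freedom is in the choice of invariant subspace, i.e., in the equality case of the trace inequality. I would argue that every optimizer must have its range inside a dominant eigenspace of $\mtx{H}$, and that the spectral gap $\lambda_d > \lambda_{d+1}$ makes the top-$d$ eigenspace $\lspan{\{\vct{u}_1, \dots, \vct{u}_d\}}$ unique; combined with uniqueness of the eigenvalue profile from the strictly convex reduced problem, this pins down $\mtx{G}$ completely. Without the gap, a tie between $\lambda_d$ and $\lambda_{d+1}$ would let one rotate the active eigenvectors and thereby destroy uniqueness, which is exactly why the gap hypothesis is needed.
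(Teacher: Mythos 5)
Your proposal is correct and follows essentially the same route as the paper's proof: reduce to an optimization over eigenvalues by fixing the eigenbasis of $\mtx{H}$, solve the resulting strictly convex program by Karush--Kuhn--Tucker theory to get the shifted-and-truncated eigenvalues, and derive uniqueness from the spectral gap $\lambda_d > \lambda_{d+1}$. The only cosmetic difference is that you invoke the von Neumann/Ky Fan trace inequality after expanding $\fnormsq{\mtx{G}-\mtx{H}}$, whereas the paper cites the Hoffman--Wielandt bound $\fnormsq{\mtx{G}-\mtx{H}} \geq \enormsq{\vct{\lambda}(\mtx{G})-\vct{\lambda}(\mtx{H})}$ directly; for Hermitian matrices these are the same inequality in different clothing.
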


The nearest matrix described by this theorem can be computed efficiently from an eigenvalue decomposition of $\mtx{H}$.  (See \cite{GvL96:Matrix-Computations} for computational details.)  The value of $\gamma$ is uniquely determined, but one must solve a small rootfinding problem to solve it.  The bisection method is an appropriate technique since the plus operator is nondifferentiable.  We omit the details, which are routine.

\begin{proof}
Given an Hermitian matrix $\mtx{A}$, denote by $\vct{\lambda}(\mtx{A})$ the vector of eigenvalues arranged in nonincreasing order.  Then we may decompose $\mtx{A} = \mtx{U} \{\diag \vct{\lambda}(\mtx{A}) \}\mtx{U}^\adj$ for some unitary matrix $\mtx{U}$.

Finding the matrix in $\coll{G}$ closest to $\mtx{H}$ is equivalent to solving the optimization problem
\begin{align*}
\min_{\mtx{G}} \fnormsq{ \mtx{G} - \mtx{H} }
\subjto
&\lambda_j(\mtx{G}) \geq 0 \text{ for $j = 1, \dots, d$}, \\
&\lambda_j(\mtx{G}) = 0 \text{ for $j = d + 1, \dots, KN$, and} \\
&\sum\nolimits_{j=1}^{KN} \lambda_j(\mtx{G}) = KN.
\end{align*}
First, we fix the eigenvalues of $\mtx{G}$ and minimize with respect to the unitary part of its eigenvalue decomposition.  In consequence of the Hoffman--Wielandt Theorem \cite{HJ85:Matrix-Analysis}, the objective function is bounded below:
$$
\fnormsq{ \mtx{G} - \mtx{H} } \geq
\enormsq{ \vct{\lambda}(\mtx{G}) - \vct{\lambda}(\mtx{H}) }.
$$
Equality holds if and only if $\mtx{G}$ and $\mtx{H}$ are simultaneously diagonalizable by a unitary matrix.  Therefore, if we decompose $\mtx{H} = \mtx{U} \{\diag \vct{\lambda}(\mtx{H})\} \mtx{U}^\adj$, the objective function attains its minimal value whenever $\mtx{G} = \mtx{U} \{\diag \vct{\lambda}(\mtx{G})\} \mtx{U}^\adj$.  Note that the matrix $\mtx{U}$ may not be uniquely determined.

We find the optimal vector of eigenvalues $\vct{\xi}$ for the matrix $\mtx{G}$ by solving the (strictly) convex program
\begin{align*}
\min_{\vct{\xi}} \enormsq{ \vct{\xi} - \vct{\lambda}(\mtx{H}) }
\subjto
& \xi_j \geq 0 \text{ for $j = 1, \dots, d$,} \\
& \xi_j = 0 \text{ for $j = d + 1, \dots, KN$, and } \\
& \sum\nolimits_{j=1}^{KN} \xi_j = KN.
\end{align*}
This minimization is accomplished by an application of Karush--Kuhn--Tucker theory \cite{Roc70:Convex-Analysis}.  In short, the top $d$ eigenvalues of $\mtx{H}$ are translated an equal amount, and those that become negative are set to zero.  The size of the translation is chosen to fulfill the third condition (which controls the trace of $\mtx{G}$).  The entries of the optimal $\vct{\xi}$ are nonincreasing on account of the ordering of $\vct{\lambda}(\mtx{H})$.

Finally, the uniqueness claim follows from the fact that the eigenspace associated with the top $d$ eigenvectors of $\mtx{H}$ is uniquely determined if and only if $\lambda_d(\mtx{H}) > \lambda_{d+1}(\mtx{H})$.
\end{proof}

\subsection{Choosing an Initial Configuration}

The success of the algorithm depends on adequate selection of the input matrix $\mtx{G}^{(0)}$.  We have found that the following strategy is reasonably effective.  It chooses random subspaces and adds them to the initial configuration only if they are sufficiently distant from the subspaces that have already been chosen.

\begin{algorithm}[Initial Configuration] \label{alg:starting-point}
\begin{inputs}
\item	The ambient dimension $d$, the subspace dimension $K$, and the number $N$ of subspaces
\item	An upper bound $\tau$ on the similarity between subspaces
\item	The maximum number $T$ of random selections
\end{inputs}
\begin{outputs}
\item	A $KN \times KN$ matrix $\mtx{G}$ from $\coll{G}$ whose off-diagonal blocks also satisfy $\fnorm{ \mtx{G}_{mn} } \leq \tau$
\end{outputs}
\begin{procedure}
\item	Initialize $t \leftarrow 0$ and $n \leftarrow 1$.
\item	Increment $t$.  If $t > T$, print a failure notice and stop.
\item	Pick a $d \times K$ matrix $\mtx{X}_n$ whose range is a uniformly random subspace in $\CGspace{K}{d}$.
\item	If $\fnorm{ \mtx{X}_m^\adj \mtx{X}_n } \leq \tau$ for each $m = 1, \dots, n - 1$, then increment $n$.
\item	If $n \leq N$, return to Step 2.
\item	Form the matrix $\mtx{X} =
\begin{bmatrix} \mtx{X}_1 & \mtx{X}_2 & \dots & \mtx{X}_N \end{bmatrix}$.
\item	Return the Gram matrix $\mtx{G} = \mtx{X}^\adj \mtx{X}$.
\end{procedure}
\end{algorithm}

To implement Step 3, we use the method developed in \cite{Ste80:Efficient-Generation}.  Draw a $d \times K$ matrix whose entries are iid complex, standard normal random variables, and perform a $\mtx{QR}$ decomposition.  The first $K$ columns of the unitary part of the $\mtx{QR}$ decomposition form an orthonormal basis for a random $K$-dimensional subspace.

The purpose of the parameter $\tau$ is to prevent the starting configuration $\mtx{X}$ from containing blocks that are nearly identical.  The extreme case $\tau = \sqrt{K}$ places no restriction on the similarity between blocks.  If $\tau$ is chosen too small (or if we are unlucky in our random choices), then this selection procedure may fail.  For this reason, we add an iteration counter to prevent the algorithm from entering an infinite loop.  We typically choose values of $\tau$ very close to the maximum value.

\subsection{Theoretical Behavior of Algorithm}

It is important to be aware that packing problems are typically difficult to solve.  Therefore, we cannot expect that our algorithm will necessarily produce a point in the intersection of the constraint sets.  One may ask whether we can make any guarantees about the behavior of Algorithm \ref{alg:alt-proj}.  This turns out to be difficult.  Indeed, there is potential that an alternating projection algorithm will fail to generate a convergent sequence of iterates \cite{Mey76:Sufficient-Conditions}.  Nevertheless, it can be shown that the sequence of iterates has accumulation points and that these accumulation points satisfy a weak structural property.

In practice, the alternating projection algorithm seems to converge, but a theoretical justification for this observation is lacking.  A more serious problem is that the algorithm frequently requires as many as 5000 iterations before the iterates settle down.  This is one of the major weaknesses of our approach.

For reference, we offer the best theoretical convergence result that we know.  The distance between a matrix and a compact collection of matrices is defined as
$$
\dist( \mtx{M}, \coll{C} ) \defby
\min_{\mtx{C} \in \coll{C}} \fnorm{ \mtx{M} - \mtx{C} }.
$$
It can be shown that the distance function is Lipschitz, hence continuous.

\begin{thm}[Global Convergence]
Suppose that Algorithm \ref{alg:alt-proj} generates an infinite sequence of iterates $\{ (\mtx{G}^{(t)}, \mtx{H}^{(t)}) \}$.  This sequence has at least one accumulation point.
\begin{itemize}
\item	Every accumulation point lies in $\coll{G} \times \coll{H}$.
\item	Every accumulation point $(\overline{\mtx{G}}, \overline{\mtx{H}})$ satisfies
$$
\fnorm{ \overline{\mtx{G}} - \overline{\mtx{H}} }
= \lim_{t \to \infty} \smnorm{\rm F}{ \mtx{G}^{(t)} - \mtx{H}^{(t)} }.
$$
\item	Every accumulation point $(\overline{\mtx{G}}, \overline{\mtx{H}})$ satisfies
$$
\fnorm{ \overline{\mtx{G}} - \overline{\mtx{H}} }
	= \dist(\overline{\mtx{G}}, \coll{H})
	= \dist(\overline{\mtx{H}}, \coll{G}).
$$
\end{itemize}
\end{thm}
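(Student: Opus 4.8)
The plan is to exploit the compactness of the two constraint sets together with a monotonicity property of the Frobenius distances generated by the two projection steps. Since the iterates $\mtx{G}^{(t)}$ (for $t \geq 1$) lie in the compact set $\coll{G}$ and the iterates $\mtx{H}^{(t)}$ lie in the compact set $\coll{H}$, the pairs $(\mtx{G}^{(t)}, \mtx{H}^{(t)})$ lie in the compact product $\coll{G} \times \coll{H}$. The Bolzano--Weierstrass theorem then guarantees that the sequence has at least one accumulation point, and because $\coll{G}$ and $\coll{H}$ are closed, any such accumulation point lies in $\coll{G} \times \coll{H}$. This disposes of the first two bullet points immediately.

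The crux is a monotonicity argument. First I would introduce the abbreviations $a_t \defby \fnorm{\mtx{G}^{(t)} - \mtx{H}^{(t)}}$ and $b_t \defby \fnorm{\mtx{H}^{(t)} - \mtx{G}^{(t+1)}}$. Because $\mtx{H}^{(t)}$ is a nearest point of $\coll{H}$ to $\mtx{G}^{(t)}$, it satisfies $a_t = \dist(\mtx{G}^{(t)}, \coll{H})$; likewise $b_t = \dist(\mtx{H}^{(t)}, \coll{G})$. Now the two projection steps give the interleaved inequalities
\begin{equation*}
b_t = \fnorm{\mtx{H}^{(t)} - \mtx{G}^{(t+1)}} \leq \fnorm{\mtx{H}^{(t)} - \mtx{G}^{(t)}} = a_t,
\end{equation*}
since $\mtx{G}^{(t)} \in \coll{G}$ is a feasible competitor for the minimizer $\mtx{G}^{(t+1)}$, and
\begin{equation*}
a_{t+1} = \fnorm{\mtx{G}^{(t+1)} - \mtx{H}^{(t+1)}} \leq \fnorm{\mtx{G}^{(t+1)} - \mtx{H}^{(t)}} = b_t,
\end{equation*}
since $\mtx{H}^{(t)} \in \coll{H}$ is a feasible competitor for the minimizer $\mtx{H}^{(t+1)}$. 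Chaining these yields $a_{t+1} \leq b_t \leq a_t$, so $(a_t)$ is nonincreasing and bounded below by zero. Hence $a_t$ converges to a limit $L \geq 0$, and the squeeze forces $b_t \to L$ as well.

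With this limit in hand the last two bullet points follow by continuity. Fix an accumulation point $(\overline{\mtx{G}}, \overline{\mtx{H}})$, realized as the limit of a subsequence indexed by $t_k$. Since the Frobenius norm is continuous, $\fnorm{\overline{\mtx{G}} - \overline{\mtx{H}}} = \lim_k a_{t_k} = L = \lim_t \fnorm{\mtx{G}^{(t)} - \mtx{H}^{(t)}}$, which is the third bullet. For the fourth, I would invoke the continuity (indeed Lipschitz continuity, as noted before the theorem) of the maps $\mtx{M} \mapsto \dist(\mtx{M}, \coll{H})$ and $\mtx{M} \mapsto \dist(\mtx{M}, \coll{G})$. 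Passing to the limit along the subsequence gives $\dist(\overline{\mtx{G}}, \coll{H}) = \lim_k \dist(\mtx{G}^{(t_k)}, \coll{H}) = \lim_k a_{t_k} = L$ and $\dist(\overline{\mtx{H}}, \coll{G}) = \lim_k \dist(\mtx{H}^{(t_k)}, \coll{G}) = \lim_k b_{t_k} = L$, so both distances equal $\fnorm{\overline{\mtx{G}} - \overline{\mtx{H}}}$.

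The argument is short precisely because the theorem claims only a weak property, and the point worth appreciating is what it does \emph{not} claim. Since $\coll{G}$ is not convex, the nearest-point map onto $\coll{G}$ may be multivalued and discontinuous, so one cannot conclude that $\overline{\mtx{G}}$ is a nearest point of $\coll{H}$ to $\overline{\mtx{H}}$, nor that the full sequence of iterates converges in norm. The interleaving monotonicity is robust to this failure, which is why I would frame the entire argument around the scalar sequences $(a_t)$ and $(b_t)$ rather than around any fixed-point characterization of the iterates. The only genuine ingredient beyond elementary compactness is the Lipschitz continuity of the set-distance functions, and that is already supplied in the text preceding the statement.
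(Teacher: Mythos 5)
Your proof is correct and follows essentially the same route as the paper's own (sketched) argument: compactness of $\coll{G} \times \coll{H}$ yields accumulation points, the interleaved projection inequalities show the distance between successive iterates is nonincreasing and hence convergent, and continuity of the norm and of the set-distance functions lets you pass to limits along subsequences. The paper states this only as a brief sketch, so your write-up is simply a fully detailed version of the identical strategy.
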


\begin{proof}[Proof sketch]
The existence of an accumulation point follows from the compactness of the constraint sets.  The algorithm does not increase the distance between successive iterates, which is bounded below by zero.  Therefore, this distance must converge.  The distance functions are continuous, so we can take limits to obtain the remaining assertions.
\end{proof}

A more detailed treatment requires the machinery of point-to-set maps, and it would not enhance our main discussion.  Please see the appendices of \cite{TDHS05:Designing-Structured} for additional information.

\section{Bounds on the Packing diameter} \label{sec:rankin}

To assay the quality of the packings that we produce, it helps to have some upper bounds on the packing diameter.  If a configuration of subspaces has a packing diameter close to the upper bound, that configuration must be a nearly optimal packing.  This approach allows us to establish that many of the packings we construct numerically have packing diameters that are essentially optimal.

\begin{thm}[Conway--Hardin--Sloane \cite{CHS96:Packing-Lines}]
The packing diameter of $N$ subspaces in the Grassmannian manifold $\FGspace{K}{d}$ equipped with chordal distance is bounded above as
\begin{equation} \label{eqn:rankin-chord}
\pack_{\chord}(\coll{X})^2 \leq
	\frac{K (d - K)}{d} \frac{N}{N - 1}.
\end{equation}
If the bound is met, all pairs of subspaces are equidistant.  When $\mathbb{F} = \Rspace{}$, the bound is attainable only if $N \leq \half d(d+1)$.  When $\mathbb{F} = \Cspace{}$, the bound is attainable only if $N \leq d^2$.
\end{thm}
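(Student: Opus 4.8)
The plan is to recognize this as a Rankin-type \emph{simplex bound}, obtained by isometrically embedding the Grassmannian (with chordal distance) into the Euclidean space of Hermitian matrices. To each subspace $\subsp{S}_n$ with orthonormal basis $\mtx{X}_n$, I would associate the orthogonal projector $\mtx{P}_n \defby \mtx{X}_n \mtx{X}_n^\adj$. Each projector is Hermitian and idempotent, so $\trace{\mtx{P}_n} = \fnormsq{\mtx{P}_n} = K$. The first key step is the identity
$$
\fnormsq{\mtx{P}_m - \mtx{P}_n} = 2K - 2\trace{\mtx{P}_m \mtx{P}_n} = 2\left[K - \fnormsq{\mtx{X}_m^\adj \mtx{X}_n}\right] = 2\,\dist_{\chord}(\subsp{S}_m, \subsp{S}_n)^2,
$$
which follows by expanding the Frobenius norm and using $\trace{\mtx{P}_m \mtx{P}_n} = \fnormsq{\mtx{X}_m^\adj \mtx{X}_n}$. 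Thus bounding the chordal packing diameter is equivalent to bounding the minimum Frobenius separation of $N$ points on a sphere of radius $\sqrt{K}$ in the space of Hermitian matrices.

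Next I would run the standard second-moment argument. Writing $\mtx{A} \defby \sum_n \mtx{P}_n$, expanding the squared norms gives
$$
\sum_{m \neq n} \fnormsq{\mtx{P}_m - \mtx{P}_n} = 2N^2 K - 2\fnormsq{\mtx{A}}.
$$
Since $\mtx{A}$ is Hermitian with $\trace{\mtx{A}} = NK$, Cauchy--Schwarz applied to its eigenvalues yields $\fnormsq{\mtx{A}} \geq (\trace{\mtx{A}})^2 / d = N^2 K^2 / d$. Combining these, and bounding the $N(N-1)$ summands below by their minimum, produces
$$
N(N-1)\cdot 2\,\pack_{\chord}(\coll{X})^2 \leq \sum_{m \neq n}\fnormsq{\mtx{P}_m - \mtx{P}_n} \leq 2N^2 K \frac{d-K}{d}.
$$
Rearranging gives exactly the claimed inequality \eqref{eqn:rankin-chord}.

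For the equality analysis, I would first observe that the chain above strings together two inequalities. Equality in the ``minimum $\leq$ average'' step forces all pairwise chordal distances to coincide, establishing the equidistance claim. Equality in Cauchy--Schwarz forces all eigenvalues of $\mtx{A}$ to agree, i.e. $\mtx{A} = (NK/d)\,\Id$. To obtain the cardinality restrictions I would pass to the centered projectors $\mtx{Q}_n \defby \mtx{P}_n - (K/d)\,\Id$. These are traceless, lie on a common sphere since $\fnormsq{\mtx{Q}_n} = K(d-K)/d$ is constant, sum to zero, and are mutually equidistant; hence they form a regular simplex centered at the origin of the space of traceless Hermitian matrices. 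Such a simplex spans an $(N-1)$-dimensional linear subspace, so $N - 1$ cannot exceed the real dimension of that ambient space. That dimension is $d^2 - 1$ in the complex Hermitian case and $\half d(d+1) - 1$ in the real symmetric case, yielding $N \leq d^2$ and $N \leq \half d(d+1)$ respectively.

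The main obstacle is the dimension count in this last step: one must confirm that attaining the bound genuinely forces the centered regular-simplex configuration (so that the $(N-1)$-dimensional spanning argument is legitimate) and correctly compute the real dimension of the space of traceless Hermitian, respectively symmetric, matrices. The Frobenius-distance arithmetic and the Cauchy--Schwarz estimate are routine; the only genuine subtlety is translating the algebraic equality conditions into the geometric statement that the $\mtx{Q}_n$ form a centered regular simplex lying on a sphere.
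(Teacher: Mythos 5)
Your proof is correct, and it follows essentially the route the paper itself indicates for this cited theorem: embedding the chordal Grassmannian into a Euclidean sphere (here, via the projectors $\mtx{P}_n = \mtx{X}_n\mtx{X}_n^\adj$ sitting on a sphere in the space of Hermitian, respectively symmetric, matrices) and then invoking Rankin's simplex bound, which you rederive inline via the second-moment/Cauchy--Schwarz argument rather than citing \cite{Ran47:Closest-Packing}. The equality analysis, including the dimension counts $d^2 - 1$ and $\half d(d+1) - 1$ for traceless Hermitian and traceless symmetric matrices and the affine independence of mutually equidistant points, is sound and matches the attainability conditions stated in the theorem.
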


The complex case is not stated in \cite{CHS96:Packing-Lines}, but it follows from an identical argument.  We refer to \eqref{eqn:rankin-chord} as the \term{Rankin bound} for subspace packings with respect to the chordal distance.  The reason for the nomenclature is that the result is established by embedding the chordal Grassmannian manifold into a Euclidean sphere and applying the classical Rankin bound for sphere packing~\cite{Ran47:Closest-Packing}.

It is also possible to draw a corollary on packing with respect to the spectral distance; this result is novel.  A subspace packing is said to be \term{equi-isoclinic} if all the principal angles between all pairs of subspaces are identical \cite{LS73:Equi-Isoclinic-Subspaces}.

\begin{cor} \label{cor:rankin-spec}
We have the following bound on the packing diameter of $N$ subspaces in the Grassmannian manifold $\FGspace{K}{d}$ equipped with the spectral distance.
\begin{equation} \label{eqn:rankin-spec}
\pack_{\spec}(\coll{X})^2 \leq
	\frac{d - K}{d} \frac{N}{N - 1}.
\end{equation}
If the bound is met, the packing is equi-isoclinic. 
\end{cor}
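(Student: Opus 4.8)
The plan is to deduce \eqref{eqn:rankin-spec} directly from the chordal Rankin bound \eqref{eqn:rankin-chord} stated in the preceding theorem, by comparing the two metrics pairwise. The key observation is that for a single pair of subspaces with principal angles $\theta_1, \dots, \theta_K$, the spectral and chordal distances are tied together by a minimum-versus-sum inequality. From \eqref{eqn:spectral-distance} and \eqref{eqn:chordal-distance} we have $\dist_{\spec}(\mtx{X}_m, \mtx{X}_n)^2 = \min_k \sin^2 \theta_k$ and $\dist_{\chord}(\mtx{X}_m, \mtx{X}_n)^2 = \sum_k \sin^2 \theta_k$, and since the minimum of $K$ nonnegative numbers never exceeds their average,
$$
\dist_{\spec}(\mtx{X}_m, \mtx{X}_n)^2 \leq \tfrac{1}{K} \dist_{\chord}(\mtx{X}_m, \mtx{X}_n)^2
\qquad \text{for every } m \neq n.
$$

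First I would pass from this pairwise estimate to the packing diameters by minimizing over pairs. Because the left-hand side is dominated, pair by pair, by the right-hand side, the pair that minimizes the right-hand side already witnesses $\pack_{\spec}(\coll{X})^2 \leq \tfrac{1}{K}\pack_{\chord}(\coll{X})^2$. Substituting the chordal Rankin bound \eqref{eqn:rankin-chord} and cancelling the common factor of $K$ gives
$$
\pack_{\spec}(\coll{X})^2 \leq \tfrac{1}{K} \cdot \frac{K(d-K)}{d}\frac{N}{N-1} = \frac{d-K}{d}\frac{N}{N-1},
$$
which is exactly \eqref{eqn:rankin-spec}.

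For the equality claim, the strategy is to trace tightness back through both inequalities, which must hold simultaneously. Tightness of the chordal Rankin bound forces, by the cited theorem, all pairs to be chordally equidistant, so that $\sum_k \sin^2 \theta_k$ takes a common value across all pairs. Tightness of the minimum-average step forces $\dist_{\spec}^2 = \tfrac{1}{K}\dist_{\chord}^2$ for every pair, and this holds precisely when $\sin^2 \theta_1, \dots, \sin^2 \theta_K$ are all equal within that pair. Combining the two conditions, each $\sin^2 \theta_k$ equals the common value $\tfrac{1}{K}\sum_k \sin^2 \theta_k$, so every principal angle between every pair of subspaces coincides, which is the equi-isoclinic condition.

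The one point requiring care—and the only real obstacle—is justifying that tightness of the \emph{overall} bound forces tightness of the pairwise min-average inequality for \emph{every} pair, not merely for the chordal-minimizing one. This follows from a short argument: once all pairs are chordally equidistant, each pairwise spectral distance satisfies $\dist_{\spec}^2 \leq \tfrac{1}{K}\dist_{\chord}^2$ with a \emph{common} right-hand side, yet the minimum of these spectral values attains that common bound; hence every pairwise spectral value must attain it, which is what yields the isocline condition on every pair.
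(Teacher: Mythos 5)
Your proposal is correct and follows essentially the same route as the paper: the min-versus-mean (power mean) inequality linking spectral and chordal distances, followed by the chordal Rankin bound \eqref{eqn:rankin-chord}, with the equality case traced back through tightness of both inequalities. Your final paragraph, arguing that chordal equidistance plus attainment of the common bound by the minimizing pair forces \emph{every} pair to be tight, makes explicit a step the paper's proof treats tersely, but the underlying argument is identical.
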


We refer to \eqref{eqn:rankin-spec} as the Rankin bound for subspace packings with respect to spectral distance.

\begin{proof}
The power mean inequality (equivalently, H{\"o}lder's inequality) yields 
$$
\min\nolimits_k \sin \theta_k \leq
	\left[ K^{-1} \sum\nolimits_{k=1}^K \sin^2 \theta_k \right]^{1/2}.
$$
For angles between zero and $\pi/2$, equality holds if and only if $\theta_1 = \dots = \theta_K$.  It follows that
$$
\pack_{\spec}(\coll{X})^2 \leq
K^{-1} \pack_{\chord}(\coll{X})^2 \leq
\frac{d - K}{d} \frac{N}{N - 1}.
$$
If the second inequality is met, then all pairs of subspaces are equidistant with respect to the chordal metric.  Moreover, if the first inequality is met, then the principal angles between each pair of subspaces are constant.  Together, these two conditions imply that the packing is equi-isoclinic.
\end{proof}

An upper bound on the maximum number of equi-isoclinic subspaces is available.  Its authors do not believe that it is sharp.

\begin{thm}[Lemmens--Seidel \cite{LS73:Equi-Isoclinic-Subspaces}]
\label{thm:ls-isoclinic}
The maximum number of equi-isoclinic $K$-dimensional subspaces of $\Rspace{d}$ is no greater than
$$
\half d(d+1) - \half K (K + 1) + 1.
$$
Similarly, the maximum number of equi-isoclinic $K$-dimensional subspaces of $\Cspace{d}$ does not exceed
$$
d^2 - K^2 + 1.
$$
\end{thm}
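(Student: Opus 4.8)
The plan is to translate the geometric statement into a dimension count in the real vector space of Hermitian (respectively, real symmetric) $d \times d$ matrices. To each subspace $\subsp{S}_n$ I associate its orthogonal projector $\mtx{P}_n = \mtx{X}_n \mtx{X}_n^\adj$. In the complex case these projectors lie in the space of Hermitian matrices, of real dimension $d^2$; in the real case they lie in the space of symmetric matrices, of dimension $\half d(d+1)$. The naive bound from linear independence alone is $N \le d^2$ (resp. $N \le \half d(d+1)$), so the work is to produce enough linear constraints on the projectors to sharpen this to the claimed value.

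First I would record the two algebraic consequences of the equi-isoclinic hypothesis. Writing $\lambda = \cos\theta$ for the common cosine of the coincident principal angles, a singular value decomposition gives $\mtx{X}_m^\adj \mtx{X}_n = \lambda \mtx{W}_{mn}$ with $\mtx{W}_{mn}$ unitary for every $m \neq n$. This yields the trace identity $\trace(\mtx{P}_m \mtx{P}_n) = \fnormsq{\mtx{X}_m^\adj \mtx{X}_n} = K\lambda^2$ and, crucially, the compression identity $\mtx{P}_m \mtx{P}_n \mtx{P}_m = \lambda^2 \mtx{P}_m$. The trace identity shows that the Gram matrix of $\{ \mtx{P}_n \}$ under the Frobenius inner product equals $K(1-\lambda^2)\Id + K\lambda^2 \onemtx$, where $\onemtx$ is the all-ones matrix; its eigenvalues $K(1-\lambda^2)$ and $K(1 + (N-1)\lambda^2)$ are all positive because $\lambda < 1$ for distinct subspaces. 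Hence the $\mtx{P}_n$ are linearly independent, and therefore so are the $N-1$ differences $\mtx{P}_n - \mtx{P}_1$ for $n = 2, \dots, N$.

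Next I would confine all of these differences to a small affine subspace. Applying the compression map $\Phi(\mtx{A}) = \mtx{P}_1 \mtx{A} \mtx{P}_1$ and using $\mtx{P}_1 \mtx{P}_n \mtx{P}_1 = \lambda^2 \mtx{P}_1$ together with $\mtx{P}_1^3 = \mtx{P}_1$ gives $\Phi(\mtx{P}_n - \mtx{P}_1) = (\lambda^2 - 1)\mtx{P}_1$, a value independent of $n$. Since $\Phi$ maps onto the Hermitian (resp. symmetric) operators on $\range \mtx{X}_1$, its image has dimension $K^2$ (resp. $\half K(K+1)$), so its kernel has that codimension. Adjoining the trace constraint $\trace(\mtx{P}_n - \mtx{P}_1) = 0$ removes one further dimension, so every difference lies in a fixed affine subspace $\subsp{A}$ of dimension $d^2 - K^2 - 1$ (resp. $\half d(d+1) - \half K(K+1) - 1$). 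Because $(\lambda^2-1)\mtx{P}_1 \neq \mtx{0}$, the set $\subsp{A}$ is a genuine affine translate rather than a linear subspace, so the linear span of any vectors drawn from it has dimension at most $\dim \subsp{A} + 1$. The $N-1$ independent differences must fit inside this span, giving $N - 1 \le d^2 - K^2$ (resp. $N - 1 \le \half d(d+1) - \half K(K+1)$), which are exactly the asserted bounds.

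The delicate point — and the step I expect to demand the most care — is the bookkeeping in the last paragraph. The compression identity by itself only improves the count to $N \le d^2 - K^2 + 2$; recovering the sharp constant $+1$ depends on using the vanishing trace of the differences to peel off one more dimension, and on correctly attributing the single remaining dimension to the affine (not linear) nature of $\subsp{A}$. Verifying that the codimension of $\Phi$'s kernel and the trace hyperplane combine to give $\dim\subsp{A} = d^2 - K^2 - 1$ is the routine but essential computation; the remaining ingredients are just the isoclinic identities and the positive-definiteness of the Gram matrix.
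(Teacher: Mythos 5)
Your proposal is correct, but there is nothing in the paper to compare it against: the paper states this result as an imported theorem with a citation to Lemmens--Seidel and gives no proof at all. So you have supplied a complete argument where the authors rely on the literature. Your argument is sound: the identities $\trace(\mtx{P}_m\mtx{P}_n) = K\lambda^2$ and $\mtx{P}_m\mtx{P}_n\mtx{P}_m = \lambda^2\mtx{P}_m$ follow immediately from $\mtx{X}_m^\adj\mtx{X}_n = \lambda\mtx{W}_{mn}$ with $\mtx{W}_{mn}$ unitary; the Gram matrix $K(1-\lambda^2)\Id + K\lambda^2\onemtx$ is positive definite for $\lambda < 1$, so the projectors (hence the differences $\mtx{P}_n - \mtx{P}_1$) are linearly independent; and $N-1$ independent vectors confined to an affine subspace of dimension $a$ span at most $a+1$ dimensions. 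The one step you flagged as needing care does check out, and is worth writing down: the trace functional is not identically zero on $\ker\Phi$ (take a Hermitian matrix supported on the orthogonal complement of $\range\mtx{X}_1$ with nonzero trace), so the combined map $\mtx{A} \mapsto (\mtx{P}_1\mtx{A}\mtx{P}_1, \trace\mtx{A})$ is surjective whenever $d > K$, which gives $\dim\subsp{A} = d^2 - K^2 - 1$ (respectively $\half d(d+1) - \half K(K+1) - 1$) exactly as claimed; the degenerate case $d = K$ is trivial since then $N = 1$. One cosmetic remark: the bound $\dim\lspan \leq \dim\subsp{A} + 1$ holds for any affine subspace, so the observation that $0 \notin \subsp{A}$ is not needed for the inequality --- it only explains why the argument cannot be improved to drop the $+1$. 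For what it is worth, your dimension-count via projectors is close in spirit to the original Lemmens--Seidel argument, which also bounds the number of subspaces by exhibiting linearly independent symmetric matrices inside a constrained subspace of the symmetric (or Hermitian) matrices.
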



\section{Experiments} \label{sec:exp}

Our approach to packing is experimental rather than theoretical, so the real question is how Algorithm \ref{alg:alt-proj} performs in practice.  In principle, this question is difficult to resolve because the optimal packing diameter is unknown for almost all combinations of $d$ and $N$.  Whenever possible, we compared our results with the Rankin bound and with the ``world record'' packings tabulated by N.\ J.\ A.\ Sloane and his colleagues \cite{Slo:Grassmannian-Web}.  In many cases, the algorithm was able to identify a nearly optimal packing.  Moreover, it yields interesting results for packing problems that have not received numerical attention.

In the next subsection, we describe detailed experiments on packing in real and complex projective spaces.  Then, we move on to packings of subspaces with respect to the chordal distance.  Afterward, we study the spectral distance and the Fubini--Study distance.

\subsection{Projective Packings}

Line packings are the simplest type of Grassmannian packing, so they offer a natural starting point.  Our goal is to produce the best packing of $N$ lines in $\FPspace{d - 1}$.  In the real case, Sloane's tables allow us to determine how much our packings fall short of the world record.  In the complex setting, there is no comparable resource, so we must rely on the Rankin bound to gauge how well the algorithm performs.

Let us begin with packing in real projective spaces.  We attempted to construct configurations of real lines whose maximum absolute inner product $\mu$ fell within $10^{-5}$ of the best value tabulated in \cite{Slo:Grassmannian-Web}.  For pairs $(d, N)$ with $d = 3, 4, 5$ and $N = 4, 5, \dots, 25$, we computed the putatively optimal value of the feasibility parameter $\mu$ from Sloane's data and equation \eqref{eqn:feas-parm}.  In each of 10 trials, we constructed a starting matrix using Algorithm \ref{alg:starting-point} with parameters $\tau = 0.9$ and $T = 10,000$.  (Recall that the value of $T$ determines the maximum number of random subspaces that are drawn when trying to construct the initial configuration.)  We applied alternating projection, Algorithm \ref{alg:alt-proj}, with the computed value of $\mu$ and the maximum number of iterations $T = 5000$.  (Our numerical experience indicates that increasing the maximum number of iterations beyond 5000 does not confer a significant benefit.)  We halted the iteration in Step 4 if the iterate $\mtx{G}^{(t)}$ exhibited no off-diagonal entry with absolute value greater than $\mu + 10^{-5}$.   After 10 trials, we recorded the largest packing diameter attained, as well as the average value of the packing diameter.  We also recorded the average number of iterations the alternating projection required per trial.

Table \ref{tab:real-projective} delivers the results of this experiment.  Following Sloane, we have reported the degrees of arc subtended by the closest pair of lines.  We believe that it is easiest to interpret the results geometrically when they are stated in this fashion.  All the tables and figures related to packing are collated at the back of this paper for easy comparison.

According to the table, the best configurations produced by alternating projection consistently attain packing diameters tenths or hundredths of a degree away from the best configurations known.  The average configurations returned by alternating projection are slightly worse, but they usually fall within a degree of the putative optimal.  Moreover, the algorithm finds certain configurations with ease.  For the pair $(5, 16)$, fewer than 1000 iterations are required on average to achieve a packing within 0.001 degrees of optimal.

A second observation is that the alternating projection algorithm typically performs better when the number $N$ of points is small.  The largest errors are all clustered at larger values of $N$.  A corollary observation is that the average number of iterations per trial tends to increase with the number of points.


There are several anomalies that we would like to point out.  The most interesting pathology occurs at the pair $(d, N) = (5, 19)$.  The best packing diameter calculated by alternating projection is about $1.76^\circ$ worse than the optimal configuration, and it is also $1.76^\circ$ worse than the best packing diameter computed for the pair $(5, 20)$.  From Sloane's tables, we can see that the (putative) optimal packing of 19 lines in $\RPspace{4}$ is actually a subset of the best packing of 20 lines.  Perhaps the fact that this packing is degenerate makes it difficult to construct.  A similar event occurs (less dramatically) at the pair $(5, 13)$.  The table also shows that the algorithm performs less effectively when the number of lines exceeds 20.

In complex projective spaces, this methodology does not apply because there are no tables available.  In fact, we only know of one paper that contains numerical work on packing in complex projective spaces \cite{ARU01:Multiple-Antenna-Signal}, but it gives very few examples of good packings.  The only method we know for gauging the quality of a complex line packing is to compare it against an upper bound.  The Rankin bound for projective packings, which is derived in Section \ref{sec:rankin}, states that every configuration $\coll{X}$ of $N$ lines in either $\RPspace{d-1}$ or $\CPspace{d-1}$ satisfies the inequality
$$
\pack_{\mathbb{P}}(\coll{X})^2 \leq
\frac{ (d- 1) \, N }{ d \, (N - 1) }.
$$
This bound is attainable only for rare combinations of $d$ and $N$.  In particular, the bound can be met in $\RPspace{d-1}$ only if $N \leq \half \, d \, (d+1)$.  In the space $\CPspace{d-1}$, attainment requires that $N \leq d^2$.  Any arrangement of lines that meets the Rankin bound must be equiangular.  These optimal configurations are called \term{equiangular tight frames}.  See \cite{SH03:Grassmannian-Frames,HP04:Optimal-Frames,TDHS05:Designing-Structured,STDH07:Existence-Equiangular} for more details.

We performed some \lang{ad hoc} experiments to produce configurations of complex lines with large packing diameters.  For each pair $(d, N)$, we used the Rankin bound to determine a lower limit on the feasibility parameter $\mu$.  Starting matrices were constructed with Algorithm \ref{alg:starting-point} using values of $\tau$ ranging between 0.9 and 1.0.  (Algorithm \ref{alg:starting-point} typically fails for smaller values of $\tau$.)  For values of the feasibility parameter between the minimal value and twice the minimal value, we performed 5000 iterations of Algorithm \ref{alg:alt-proj}, and we recorded the largest packing diameter attained during these trials.

Table \ref{tab:complex-projective} compares our results against the Rankin bound.  We see that many of the complex line configurations have packing diameters much smaller than the Rankin bound, which is not surprising because the bound is usually not attainable.  Some of our configurations fall within a thousandth of a degree of the bound, which is essentially optimal.

Table \ref{tab:complex-projective} contains a few oddities.  In $\CPspace{4}$, the best packing diameter computed for $N = 18, 19, \dots, 24$ is worse than the packing diameter for $N = 25$.  This configuration of 25 lines is an equiangular tight frame, which means that it is an optimal packing \cite[Table 1]{TDHS05:Designing-Structured}.  It seems likely that the optimal configurations for the preceding values of $N$ are just subsets of the optimal arrangement of 25 lines.  As before, it may be difficult to calculate this type of degenerate packing.  A similar event occurs less dramatically at the pair $(d, N) = (4,13)$ and at the pairs $(4,17)$ and $(4,18)$.

Figure \ref{fig:proj-packings} compares the quality of the best real projective packings from \cite{Slo:Grassmannian-Web} with the best complex projective packings that we obtained.  It is natural that the complex packings are better than the real packings because the real projective space can be embedded isometrically into the complex projective space.  But it is remarkable how badly the real packings compare with the complex packings.  The only cases where the real and complex ensembles have the same packing diameter occur when the real configuration meets the Rankin bound.

\subsection{The Chordal Distance}

Emboldened by this success with projective packings, we move on to packings of subspaces with respect to the chordal distance.  Once again, we are able to use Sloane's tables for guidance in the real case.  In the complex case, we fall back on the Rankin bound.

For each triple $(d, K, N)$, we determined a value for the feasibility parameter $\mu$ from the best packing diameter Sloane recorded for $N$ subspaces in $\RGspace{K}{d}$, along with equation \eqref{eqn:feas-parm}.  We constructed starting points using the modified version of Algorithm \ref{alg:starting-point} with $\tau = \sqrt{K}$, which represents no constraint.  (We found that the alternating projection performed no better with initial configurations generated from smaller values of $\tau$.)  Then we executed Algorithm \ref{alg:alt-proj} with the calculated value of $\mu$ for 5000 iterations.

Table \ref{tab:real-chordal} demonstrates how the best packings we obtained compare with Sloane's best packings.  Many of our real configurations attained a squared packing diameter within $10^{-3}$ of the best value Sloane recorded.  Our algorithm was especially successful for smaller numbers of subspaces, but its performance began to flag as the number of subspaces approached 20.

Table \ref{tab:real-chordal} contains several anomalies.  For example, our configurations of $N = 11, 12, \dots, 16$ subspaces in $\Rspace{4}$ yield worse packing diameters than the configuration of 17 subspaces.  It turns out that this configuration of 17 subspaces is optimal, and Sloane's data show that the (putative) optimal arrangements of 11 to 16 subspaces are all subsets of this configuration.  This is the same problem that occurred in some of our earlier experiments, and it suggests again that our algorithm has difficulty locating these degenerate configurations precisely.

The literature contains very few experimental results on packing in complex Grassmannian manifolds equipped with chordal distance.  To our knowledge, the only numerical work appears in two short tables from \cite{ARU01:Multiple-Antenna-Signal}.  Therefore, we found it valuable to compare our results against the Rankin bound for subspace packings, which is derived in Section \ref{sec:rankin}.  For reference, this bound requires that every configuration $\coll{X}$ of $N$ subspaces in $\FGspace{K}{d}$ satisfy the inequality
$$
\pack_{\chord}(\coll{X})^2 \leq
\frac{K \, (d - K)}{d} \, \frac{N}{N - 1}.
$$
This bound cannot always be met.  In particular, the bound is attainable in the complex setting only if $N \leq d^2$.  In the real setting, the bound requires that $N \leq \half \, d \, (d + 1)$.  When the bound is attained, each pair of subspaces in $\coll{X}$ is equidistant.

We performed some \lang{ad hoc} experiments to construct a table of packings in $\CGspace{K}{d}$ equipped with the chordal distance.  For each triple $(d, K, N)$, we constructed random starting points using Algorithm \ref{alg:starting-point} with $\tau = \sqrt{K}$ (which represents no constraint).  Then we used the Rankin bound to calculate a lower limit on the feasibility parameter $\mu$.  For this value of $\mu$, we executed the alternating projection, Algorithm \ref{alg:alt-proj}, for 5000 iterations.

The best packing diameters we obtained are listed in Table \ref{tab:complex-chordal}.  We see that there is a remarkable correspondence between the squared packing diameters of our configurations and the Rankin bound.  Indeed, many of our packings are within $10^{-4}$ of the bound, which means that these configurations are essentially optimal.  The algorithm was less successful as $N$ approached $d^2$, which is an upper bound on the number $N$ of subspaces for which the Rankin bound is attainable.

Figure \ref{fig:chord-packings} compares the packing diameters of the best configurations in real and complex Grassmannian spaces equipped with chordal distance.  It is remarkable that both real and complex packings almost meet the Rankin bound for all $N$ where it is attainable.  Notice how the real packing diameters fall off as soon as $N$ exceeds $\half \, d \, (d + 1)$.  In theory, a complex configuration should always attain a better packing diameter than the corresponding real configuration because the real Grassmannian space can be embedded isometrically into the complex Grassmannian space.  The figure shows that our best arrangements of 17 and 18 subspaces in $\CGspace{2}{4}$ are actually slightly worse than the real arrangements calculated by Sloane.  This indicates a failure of the alternating projection algorithm.

\subsection{The Spectral Distance}

Next, we consider how to compute Grassmannian packings with respect to the spectral distance.  This investigation requires some small modifications to the algorithm, which are described in the next subsection.  Afterward, we provide the results of some numerical experiments.

\subsubsection{Modifications to Algorithm}

To construct packings with respect to the spectral distance, we tread a familiar path.  Suppose that we wish to produce a configuration of $N$ subspaces in $\CGspace{K}{d}$ with a packing diameter $\rho$.  The feasibility problem requires that
\begin{equation} \label{eqn:grass-proj-feasibility}
\max_{m \neq n} \ \pnorm{2, 2}{ \mtx{X}_m^\adj \, \mtx{X}_n }
\leq \mu
\end{equation}
where $\mu = \sqrt{ 1 - \rho^2 }$.  This leads to the convex structural constraint set
\begin{multline*} 
\coll{H}(\mu) \defby
\{ \mtx{H} \in \Cspace{KN \times KN} : \mtx{H} = \mtx{H}^\adj, \quad
\mtx{H}_{nn} = \Id \text{ for $n = 1, 2, \dots, N$},
\quad\text{and}\quad \\
\pnorm{2,2}{\mtx{H}_{mn}} \leq \mu \text{ for all $m \neq n$}
\}.
\end{multline*}
The spectral constraint set is the same as before.  The next proposition shows how to find the matrix in $\coll{H}$ closest to an initial matrix.  In preparation, define the truncation operator $[x]_{\mu} = \min\{ x, \mu \}$ for numbers, and extend it to matrices by applying it to each component.

\begin{prop}
Let $\mtx{G}$ be an Hermitian matrix.  With respect to the Frobenius norm, the unique matrix in $\coll{H}(\mu)$ nearest to $\mtx{G}$ has a block identity diagonal.  If the off-diagonal block $\mtx{G}_{mn}$ has a singular value decomposition $\mtx{U}_{mn} \mtx{C}_{mn} \mtx{V}_{mn}^\adj$, then
$$
\mtx{H}_{mn} = \left\{
\begin{array}{ll}
\mtx{G}_{mn} & \text{if $\pnorm{2,2}{\mtx{G}_{mn}} \leq \mu$, and} \\
\mtx{U}_{mn} \, [\mtx{C}_{mn}]_{\mu} \, \mtx{V}_{mn}^\adj \qquad &
\text{otherwise}.
\end{array} \right.
$$
\end{prop}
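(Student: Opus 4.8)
The plan is to use the block structure of the Frobenius norm together with Hermitian symmetry to decouple the nearness problem into independent matrix-capping problems, one for each off-diagonal block. First I would expand
$$
\fnormsq{\mtx{H} - \mtx{G}}
= \sum\nolimits_{n} \fnormsq{\mtx{H}_{nn} - \mtx{G}_{nn}}
+ \sum\nolimits_{m \neq n} \fnormsq{\mtx{H}_{mn} - \mtx{G}_{mn}}.
$$
Membership in $\coll{H}(\mu)$ forces $\mtx{H}_{nn} = \Id$, so the diagonal contributions are constant and the claimed block identity diagonal is immediate. For the off-diagonal terms, both $\mtx{H}$ and $\mtx{G}$ are Hermitian, so $\mtx{H}_{nm} = \mtx{H}_{mn}^\adj$ and $\mtx{G}_{nm} = \mtx{G}_{mn}^\adj$. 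Since the Frobenius norm is invariant under conjugate transposition, the $(n,m)$ term equals the $(m,n)$ term, and the spectral-norm constraint is likewise symmetric under transposition. Hence it suffices to minimize, independently for each unordered pair $\{m,n\}$,
$$
\fnormsq{\mtx{M} - \mtx{G}_{mn}}
\subjto
\pnorm{2,2}{\mtx{M}} \leq \mu,
$$
with $\mtx{H}_{nm}$ then determined as $\mtx{H}_{mn}^\adj$.

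The heart of the matter is this per-block subproblem. I would write $\fnormsq{\mtx{M} - \mtx{G}_{mn}} = \fnormsq{\mtx{M}} - 2 \real \trace(\mtx{M}^\adj \mtx{G}_{mn}) + \fnormsq{\mtx{G}_{mn}}$ and invoke von Neumann's trace inequality \cite{HJ85:Matrix-Analysis}, which bounds $\real \trace(\mtx{M}^\adj \mtx{G}_{mn}) \leq \sum_k \sigma_k(\mtx{M}) \, \sigma_k(\mtx{G}_{mn})$ with equality exactly when $\mtx{M}$ and $\mtx{G}_{mn}$ share a common system of singular vectors. This parallels the Hoffman--Wielandt argument already used for the spectral constraint set. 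Combining the two estimates gives
$$
\fnormsq{\mtx{M} - \mtx{G}_{mn}} \geq
\sum\nolimits_k \left( \sigma_k(\mtx{M}) - \sigma_k(\mtx{G}_{mn}) \right)^2,
$$
which reduces the problem to the separable scalar program of minimizing $\sum_k (s_k - \sigma_k(\mtx{G}_{mn}))^2$ over $0 \leq s_k \leq \mu$. Each term is minimized by the capping rule $s_k = \min\{ \sigma_k(\mtx{G}_{mn}), \mu \}$, that is, by replacing the diagonal $\mtx{C}_{mn}$ with $[\mtx{C}_{mn}]_\mu$; equality in the trace inequality is then restored by reusing the singular vectors of $\mtx{G}_{mn}$, yielding $\mtx{M} = \mtx{U}_{mn} \, [\mtx{C}_{mn}]_\mu \, \mtx{V}_{mn}^\adj$. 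This is exactly the stated formula, and it is consistent with Hermitian symmetry because an SVD of $\mtx{G}_{nm} = \mtx{G}_{mn}^\adj$ is obtained by interchanging the singular-vector factors, so the formula automatically returns $\mtx{H}_{nm} = \mtx{H}_{mn}^\adj$.

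Finally, I would establish uniqueness from convexity rather than from the construction. The set $\coll{H}(\mu)$ is closed and convex, and the Hermitian matrices form a real Hilbert space under the Frobenius inner product, so the metric projection onto $\coll{H}(\mu)$ is unique; this simultaneously shows that $\mtx{U}_{mn} \, [\mtx{C}_{mn}]_\mu \, \mtx{V}_{mn}^\adj$ does not depend on any ambiguity in the chosen singular value decomposition. I expect the only real obstacle to be the per-block subproblem, namely the reduction to the singular values via the trace inequality and the verification that equality can be attained by the capped singular values without violating the spectral-norm constraint; the block decoupling and the uniqueness-by-convexity are then routine.
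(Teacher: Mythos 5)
Your proposal is correct and follows essentially the same route as the paper's proof: the paper also decouples the problem block-by-block, invokes the Hoffman--Wielandt theorem for singular values (which is exactly the inequality you derive from von Neumann's trace inequality) to show the minimizer shares singular vectors with $\mtx{G}_{mn}$, and then solves the resulting scalar program by capping the singular values at $\mu$, with uniqueness following from convexity. Your explicit treatment of the Hermitian coupling between the $(m,n)$ and $(n,m)$ blocks is a detail the paper leaves implicit, but it does not change the argument.
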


\begin{proof}
To determine the $(m, n)$ off-diagonal block of the solution matrix $\mtx{H}$, we must solve the optimization problem
$$
\min\nolimits_{\mtx{A}} \ \half \, \fnormsq{ \mtx{A} - \mtx{G}_{mn} }
\subjto
\pnorm{2,2}{\mtx{A}} \leq \mu.
$$
The Frobenius norm is strictly convex and the spectral norm is convex, so this problem has a unique solution.

Let $\vct{\sigma}(\cdot)$ return the vector of decreasingly ordered singular values of a matrix.  Suppose that $\mtx{G}_{mn}$ has the singular value decomposition $\mtx{G}_{mn} = \mtx{U} \{ \diag \vct{\sigma}(\mtx{G}_{mn}) \} \mtx{V}^\adj$.  The constraint in the optimization problem depends only on the singular values of $\mtx{A}$, and so the Hoffman--Wielandt Theorem for singular values \cite{HJ85:Matrix-Analysis} allows us to check that the solution has the form $\mtx{A} = \mtx{U} \{ \diag \vct{\sigma}(\mtx{A}) \} \mtx{V}^\adj$.

To determine the singular values $\vct{\xi} = \vct{\sigma}(\mtx{A})$ of the solution, we must solve the (strictly) convex program
$$
\min\nolimits_{\vct{\xi}} \ \half \, \enormsq{ \vct{\xi} - \vct{\sigma}(\mtx{G}_{mn}) }
\subjto
\xi_k \leq \mu.
$$
An easy application of Karush--Kuhn--Tucker theory \cite{Roc70:Convex-Analysis} proves that the solution is obtained by truncating the singular values of $\mtx{G}_{mn}$ that exceed $\mu$.
\end{proof}

\subsubsection{Numerical Results}

To our knowledge, there are no numerical studies of packing in Grassmannian spaces equipped with spectral distance.  To gauge the quality of our results, we compare them against the upper bound of Corollary \ref{cor:rankin-spec}.  In the real or complex setting, a configuration $\coll{X}$ of $N$ subspaces in $\FGspace{K}{d}$ with respect to the spectral distance must satisfy the bound
$$
\pack_{\spec}(\coll{X})^2 \leq
\frac{d - K}{d} \, \frac{N}{N - 1}.
$$
In the real case, the bound is attainable only if $N \leq \half \, d \, (d + 1) - \half \, K \, (K + 1) + 1$, while attainment in the complex case requires that $N \leq d^2 - K^2 + 1$ \cite{LS73:Equi-Isoclinic-Subspaces}.  When a configuration meets the bound, the subspaces are not only equidistant but also \term{equi-isoclinic}.  That is, all principal angles between all pairs of subspaces are identical.

We performed some limited \lang{ad hoc} experiments in an effort to produce good configurations of subspaces with respect to the spectral distance.  We constructed random starting points using the modified version of Algorithm \ref{alg:starting-point} with $\tau = 1$, which represents no constraint.  (Again, we did not find that smaller values of $\tau$ improved the performance of the alternating projection.) From the Rankin bound, we calculated the smallest possible value of the feasibility parameter $\mu$.  For values of $\mu$ ranging from the minimal value to twice the minimal value, we ran the alternating projection, Algorithm \ref{alg:alt-proj}, for 5000 iterations, and we recorded the best packing diameters that we obtained.

Table \ref{tab:projection-dist} displays the results of our calculations.  We see that some of our configurations essentially meet the Rankin Bound, which means that they are equi-isoclinic.  It is clear that alternating projection also succeeds reasonably well for this packing problem.

The most notable pathology in the table occurs for configurations of 8 and 9 subspaces in $\RGspace{3}{6}$.  In these cases, the algorithm always yielded arrangements of subspaces with a zero packing diameter, which implies that two of the subspaces intersect nontrivially.  Nevertheless, we were able to construct random starting points with a nonzero packing diameter, which means that the algorithm is making the initial configuration worse.  We do not understand the reason for this failure.

Figure \ref{fig:spec-packings} makes a graphical comparison between the real and complex subspace packings.  On the whole, the complex packings are much better than the real packings.  For example, every configuration of subspaces in $\CGspace{2}{6}$ nearly meets the Rankin bound, while just two of the real configurations achieve the same distinction.  In comparison, it is curious how few arrangements in $\CGspace{2}{5}$ come anywhere near the Rankin bound.



\subsection{The Fubini--Study Distance}

When we approach the problem of packing in Grassmannian manifolds equipped with the Fubini--Study distance, we are truly out in the wilderness.  To our knowledge, the literature contains neither experimental nor theoretical treatments of this question.  Moreover, we are not presently aware of general upper bounds on the Fubini--Study packing diameter that we might use to assay the quality of a configuration of subspaces.  Nevertheless, we attempted a few basic experiments.  The investigation entails some more modifications to the algorithm, which are described below.  Afterward, we go over our experimental results.  We view this work as very preliminary.

\subsubsection{Modifications to Algorithm}

Suppose that we wish to construct a configuration of $N$ subspaces whose Fubini--Study packing diameter exceeds $\rho$.  The feasibility condition is
\begin{equation} \label{eqn:grass-fs-feasibility}
\max_{m \neq n} \ \abs{\det \mtx{X}_m^\adj \, \mtx{X}_n }
\leq \mu
\end{equation}
where $\mu = \cos \rho$.  This leads to the structural constraint set
\begin{multline*} 
\coll{H}(\mu) \defby
\{ \mtx{H} \in \Cspace{KN \times KN} : \mtx{H} = \mtx{H}^\adj, \quad
\mtx{H}_{nn} = \Id \text{ for $n = 1, 2, \dots, N$},
\quad\text{and}\quad \\
\abs{\det{\mtx{H}_{mn}}} \leq \mu \text{ for all $m \neq n$}
\}.
\end{multline*}
Unhappily, this set is no longer convex.  To produce a nearest matrix in $\coll{H}$, we must solve a nonlinear programming problem.  The following proposition describes a numerically favorable formulation.

\begin{prop}
Let $\mtx{G}$ be an Hermitian matrix.  Suppose that the off-diagonal block $\mtx{G}_{mn}$ has singular value decomposition $\mtx{U}_{mn} \mtx{C}_{mn} \mtx{V}_{mn}^\adj$.  Let $\vct{c}_{mn} = \diag \mtx{C}_{mn}$, and find a (real) vector $\vct{x}_{mn}$ that solves the optimization problem
$$
\min_{\vct{x}} \quad \half \enormsq{ \exp(\vct{x}) - \vct{c}_{mn} }
\subjto
\onevct^\adj \, \vct{x} \leq \log \mu.
$$
In Frobenius norm, a matrix $\mtx{H}$ from $\coll{H}(\mu)$ that is closest to $\mtx{G}$ has a block-identity diagonal and off-diagonal blocks
$$
\mtx{H}_{mn} = \left\{
\begin{array}{ll}
\mtx{G}_{mn} & \text{if $\abs{\det{\mtx{G}_{mn}}} \leq \mu$, and} \\
\mtx{U}_{mn} \{ \diag (\exp\vct{x}_{mn}) \} \mtx{V}_{mn}^\adj \qquad &
\text{otherwise}.
\end{array} \right.
$$
\end{prop}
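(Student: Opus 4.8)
The plan is to reduce the matrix nearness problem to a collection of independent low-dimensional problems by exploiting block structure. Since $\mtx{G}$ and every $\mtx{H} \in \coll{H}(\mu)$ are Hermitian, the squared Frobenius distance splits as $\fnormsq{\mtx{H} - \mtx{G}} = \sum_n \fnormsq{\mtx{H}_{nn} - \mtx{G}_{nn}} + 2\sum_{m < n} \fnormsq{\mtx{H}_{mn} - \mtx{G}_{mn}}$, and the defining constraints of $\coll{H}(\mu)$ decouple across blocks: the diagonal blocks are forced to equal $\Id$, and each constraint $\abs{\det \mtx{H}_{mn}} \le \mu$ involves only the single block $\mtx{H}_{mn}$. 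The Hermitian symmetry $\mtx{H}_{nm} = \mtx{H}_{mn}^\adj$ costs nothing, since it preserves both the per-block distance and the magnitude $\abs{\det \mtx{H}_{mn}^\adj} = \abs{\det \mtx{H}_{mn}}$. Thus I would optimize each super-diagonal block on its own, solving the scalar-matrix problem of minimizing $\half \fnormsq{\mtx{A} - \mtx{G}_{mn}}$ subject to $\abs{\det \mtx{A}} \le \mu$, and define the sub-diagonal blocks by conjugate transposition.

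For a single block I would split into two cases. If $\abs{\det \mtx{G}_{mn}} \le \mu$, then $\mtx{G}_{mn}$ is feasible and is the unconstrained minimizer, so $\mtx{H}_{mn} = \mtx{G}_{mn}$. In the active case $\abs{\det \mtx{G}_{mn}} > \mu$, a minimizer exists because the objective is coercive on the closed feasible set. The key structural observation, as in the two preceding propositions, is that the constraint sees $\mtx{A}$ only through its singular values, because $\abs{\det \mtx{A}} = \prod_k \sigma_k(\mtx{A})$. Fixing the singular values and minimizing over the unitary factors, the Hoffman--Wielandt theorem for singular values \cite{HJ85:Matrix-Analysis} gives $\fnormsq{\mtx{A} - \mtx{G}_{mn}} \ge \enormsq{\vct{\sigma}(\mtx{A}) - \vct{c}_{mn}}$, with equality exactly when $\mtx{A}$ shares the singular vectors $\mtx{U}_{mn}, \mtx{V}_{mn}$ of $\mtx{G}_{mn}$. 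Hence an optimal block is $\mtx{A} = \mtx{U}_{mn}\{\diag \vct{a}\}\mtx{V}_{mn}^\adj$, and the problem collapses to the $K$-dimensional program of minimizing $\half \enormsq{\vct{a} - \vct{c}_{mn}}$ over $\vct{a} \ge \zerovct$ with $\prod_k a_k \le \mu$.

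It remains to linearize the multiplicative constraint. Because $\prod_k (\vct{c}_{mn})_k > \mu > 0$ in the active case, I would first argue that the minimizer has strictly positive entries: sending a coordinate to zero forces the product to vanish and leaves the remaining coordinates free to match their targets, and a direct comparison with the interior stationary point determined by the Lagrange conditions $a_k - (\vct{c}_{mn})_k + \lambda \mu / a_k = 0$ shows this is never optimal. With all $a_k > 0$, the substitution $a_k = \exp(x_k)$ is a bijection onto the positive orthant, and the constraint $\prod_k a_k \le \mu$ turns into the linear constraint $\onevct^\adj \vct{x} \le \log \mu$. This is exactly the stated program for $\vct{x}_{mn}$; reversing the substitution recovers $\mtx{H}_{mn} = \mtx{U}_{mn}\{\diag(\exp \vct{x}_{mn})\}\mtx{V}_{mn}^\adj$.

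The main obstacle is that, unlike the two earlier nearness problems, this one is genuinely non-convex: the feasible set $\{\prod_k a_k \le \mu\}$ is not convex, and even after the exponential substitution the objective $\half \enormsq{\exp(\vct{x}) - \vct{c}_{mn}}$ is not convex. Consequently the Karush--Kuhn--Tucker conditions \cite{Roc70:Convex-Analysis} are merely necessary, so the delicate point is to certify that the interior stationary point is a global minimizer and that no boundary configuration with a vanishing singular value beats it. This non-convexity is precisely why the proposition claims only that $\mtx{H}$ is \emph{a} closest matrix rather than the unique one, and why the reparametrized program is advertised as numerically favorable rather than convex.
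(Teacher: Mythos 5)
Your proposal is correct and follows essentially the same route as the paper's proof: reduce to an independent per-block nearness problem, invoke the Hoffman--Wielandt argument for singular values (the paper's ``familiar argument'') to align the singular vectors of the solution with those of $\mtx{G}_{mn}$, collapse to a $K$-dimensional program over the singular values, and pass to exponential coordinates (the paper's ``change variables to complete the proof''). Your added care about the inactive case, the strict positivity of the optimal singular values, and the residual non-convexity fills in details the paper leaves implicit rather than charting a different course.
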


We use $\exp(\cdot)$ to denote the componentwise exponential of a vector.  One may establish that the optimization problem is not convex by calculating the Hessian of the objective function.

\begin{proof}
To determine the $(m, n)$ off-diagonal block of the solution matrix $\mtx{H}$, we must solve the optimization problem
$$
\min\nolimits_{\mtx{A}} \ \half \, \fnormsq{ \mtx{A} - \mtx{G}_{mn} }
\subjto
\abs{\det \mtx{A} } \leq \mu.
$$
We may reformulate this problem as
$$
\min\nolimits_{\mtx{A}} \ 
\half \, \fnormsq{ \mtx{A} - \mtx{G}_{mn} }
\subjto
\sum\nolimits_{k=1}^{K} \log \sigma_k(\mtx{A}) \leq \log \mu.
$$
A familiar argument proves that the solution matrix has the same left and right singular vectors as $\mtx{G}_{mn}$.  To obtain the singular values $\vct{\xi} = \vct{\sigma}(\mtx{A})$ of the solution, we consider the mathematical program
$$
\min\nolimits_{\vct{\xi}} \ 
\half \,\enormsq{ \vct{\xi} - \vct{\sigma}(\mtx{G}_{mn}) }
\subjto
\sum\nolimits_{k=1}^K \log \xi_k \leq \log \mu.
$$
Change variables to complete the proof.
\end{proof}

\subsubsection{Numerical Experiments}

We implemented the modified version of Algorithm \ref{alg:alt-proj} in Matlab, using the built-in nonlinear programming software to solve the optimization problem required by the proposition.  For a few triples $(d, K, N)$, we ran 100 to 500 iterations of the algorithm for various values of the feasibility parameter $\mu$.  (Given the exploratory nature of these experiments, we found that the implementation was too slow to increase the number of iterations.)

The results appear in Table \ref{tab:fubini-study}.  For small values of $N$, we find that the packings exhibit the maximum possible packing diameter $\pi / 2$, which shows that the algorithm is succeeding in these cases.  For larger values of $N$, we are unable to judge how close the packings might decline from optimal.

Figure \ref{fig:fs-packings} compares the quality of our real packings against our complex packings.  In each case, the complex packing is at least as good as the real packing, as we would expect.  The smooth decline in the quality of the complex packings suggests that there is some underlying order to the packing diameters, but it remains to be discovered.

To perform large-scale experiments, it will probably be necessary to tailor an algorithm that can solve the nonlinear programming problems more quickly.  It may also be essential to implement the alternating projection in a programming environment more efficient than Matlab.  Therefore, a detailed study of packing with respect to the Fubini--Study distance must remain a topic for future research.




\section{Discussion} \label{sec:open}

\subsection{Subspace Packing in Wireless Communications} \label{sec:discussion}

Configurations of subspaces arise in several aspects of wireless communication, especially in systems with multiple transmit and receive antennas. The intuition behind this connection is that the transmitted and received signals in a multiple antenna system are connected by a matrix transformation, or \term{matrix channel}.

Subspace packings occur in two wireless applications: noncoherent communication and in subspace quantization. The noncoherent application is primarily of theoretical interest, while subspace quantization has a strong impact on practical wireless systems. Grassmannian packings appear in these situations due to an assumption that the matrix channel should be modeled as a complex Gaussian random matrix.

In the noncoherent communication problem, it has been shown that, from an information-theoretic perspective, under certain assumptions about the channel matrix, the optimum transmit signal corresponds to a packing in $\CGspace{K}{d}$ where $K$ corresponds to the minimum of the number of transmit and receive antennas and $d$ corresponds to the number of consecutive samples over which the channel is constant~\cite{ZheTse:Communication-on-the-Grassmann-manifold::02,HocMar:Unitary-space-time-modulation:00}.  In other words, the number of subspaces $K$ is determined by the system configuration, while $d$ is determined by the carrier frequency and the degree of mobility in the propagation channel.

On account of this application, several papers have investigated the problem of finding packings in Grassmannian manifolds.  One approach for the case of $K = 1$ is presented in \cite{HocMar:Unitary-space-time-modulation:00}.  This paper proposes a numerical algorithm for finding line packings, but it does not discuss its properties or connect it with the general subspace packing problem.  Another approach, based on discrete Fourier transform matrices, appears in \cite{HocMarRic:Systematic-design-of-unitary:00}. This construction is both structured and flexible, but it does not lead to optimal packings.  The paper \cite{ARU01:Multiple-Antenna-Signal} studies Grassmannian packings in detail, and it contains an algorithm for finding packings in the complex Grassmannian manifold equipped with chordal distance.  This algorithm is quite complex: it uses surrogate functionals to solve a sequence of relaxed nonlinear programs.  The authors tabulate several excellent chordal packings, but it is not clear whether their method generalizes to other metrics.

The subspace quantization problem also leads to Grassmannian packings.  In multiple-antenna wireless systems, one must quantize the dominant subspace in the matrix communication channel.  Optimal quantizers can be viewed as packings in $\CGspace{K}{d}$, where $K$ is the dimension of the subspace and $d$ is the number of transmit antennas.  The chordal distance, the spectral distance, and the Fubini--Study distance are all useful in this connection \cite{LovHea:Limited-feedback-unitary-stbc:05,LovHea:Limited-feedback-unitary:05}.  This literature does not describe any new algorithms for constructing packings; it leverages results from the noncoherent communication literature.  Communication strategies based on quantization via subspace packings have been incorporated into at least one recent standard~\cite{standard16e}.

\subsection{Conclusions}

We have shown that the alternating projection algorithm can be used to solve many different packing problems.  The method is easy to understand and to implement, even while it is versatile and powerful.  In cases where experiments have been performed, we have often been able to match the best packings known.  Moreover, we have extended the method to solve problems that have not been studied numerically.  Using the Rankin bounds, we have been able to show that many of our packings are essentially optimal.  It seems clear that alternating projection is an effective numerical algorithm for packing.

\appendix

\section{Tammes' Problem} \label{app:tammes}

The alternating projection method can also be used to study Tammes' Problem of packing points on a sphere \cite{Tam30:Origin-Number}.  This question has received an enormous amount of attention over the last 75 years, and extensive tables of putatively optimal packings are available \cite{Slo:Sphere-Packing-Web}.  This appendix offers a brief treatment of our work on this problem.

\subsection{Modifications to Algorithm}

Suppose that we wish to produce a configuration of $N$ points on the unit sphere $\Sspace{d-1}$ with a packing diameter $\rho$.  The feasibility problem requires that
\begin{equation} \label{eqn:tammes-feasibility}
\max_{m \neq n} \ \ip{ \vct{x}_m }{ \vct{x}_n }
\leq \mu
\end{equation}
where $\mu = \sqrt{ 1 - \rho^2 }$.  This leads to the convex structural constraint set
\begin{multline*} 
\coll{H}(\mu) \defby
\{ \mtx{H} \in \Rspace{N \times N} : \mtx{H} = \mtx{H}^\adj, \quad
h_{nn} = 1 \text{ for $n = 1, 2, \dots, N$},
\quad\text{and}\quad \\
-1 \leq h_{mn} \leq \mu \text{ for all $m \neq n$}
\}.
\end{multline*}
The spectral constraint set is the same as before.  The associated matrix nearness problem is trivial to solve.

\begin{prop} \label{prop:sphere-structural}
Let $\mtx{G}$ be a real, symmetric matrix.  With respect to Frobenius norm, the unique matrix in $\coll{H}(\mu)$ closest to $\mtx{G}$ has a unit diagonal and off-diagonal entries that satisfy
$$
h_{mn} = \left\{ \begin{array}{ll}
	-1, \qquad & g_{mn} < -1, \\
	g_{mn}, \qquad & -1 \leq g_{mn} \leq \mu, \text{ and} \\
	\mu, & \mu < g_{mn}.
	\end{array} \right.
$$
\end{prop}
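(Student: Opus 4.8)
The plan is to exploit the fact that the squared Frobenius distance separates completely across matrix entries, so that this nearness problem reduces to a family of independent scalar problems. Writing $\fnormsq{\mtx{H} - \mtx{G}} = \sum_{m,n} (h_{mn} - g_{mn})^2$, I would first dispose of the diagonal: the constraint $h_{nn} = 1$ pins down those entries regardless of $\mtx{G}$, so the diagonal contributes a fixed amount to the objective and requires no optimization.

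For the off-diagonal entries, the only coupling between variables comes from the symmetry requirement $h_{mn} = h_{nm}$. Here I would invoke the hypothesis that $\mtx{G}$ is symmetric: since $g_{mn} = g_{nm}$, the contribution of each unordered pair $\{m, n\}$ collapses to $2 (h_{mn} - g_{mn})^2$ once we enforce $h_{mn} = h_{nm}$. Thus the symmetry constraint is automatically compatible with entrywise minimization, and the optimization decomposes into one scalar subproblem per off-diagonal pair, namely to minimize $(h - g_{mn})^2$ over $h$ subject to $-1 \leq h \leq \mu$.

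Each such subproblem is simply the metric projection of the scalar $g_{mn}$ onto the closed interval $[-1, \mu]$. Its solution is the familiar clamping rule, which leaves $g_{mn}$ unchanged when it already lies in the interval and otherwise snaps it to the nearer endpoint; this is precisely the piecewise formula in the statement. I would justify it in a single line, either by the monotonicity of $h \mapsto (h - g_{mn})^2$ away from its vertex or by appealing to the standard fact that projection onto a nonempty closed interval is well defined and single-valued.

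Uniqueness is then immediate from strict convexity: the squared Frobenius distance is a strictly convex function of $\mtx{H}$, and $\coll{H}(\mu)$ is a closed convex set, being the intersection of the symmetric subspace with affine diagonal constraints and a box of off-diagonal bounds. Hence the nearest point is unique. There is no genuine obstacle in this argument; the only point demanding any care is the bookkeeping that ties the symmetry constraint to the symmetry of $\mtx{G}$, which is what guarantees that the entrywise clamped matrix is itself symmetric and therefore feasible.
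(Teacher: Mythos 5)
Your proof is correct. The paper offers no proof at all for this proposition---it simply declares the nearness problem ``trivial to solve''---and your argument (entrywise separation of the squared Frobenius norm, the symmetry of $\mtx{G}$ guaranteeing that the clamped matrix is itself symmetric and hence feasible, scalar projection onto the interval $[-1,\mu]$, and uniqueness from strict convexity over the closed convex set $\coll{H}(\mu)$) is exactly the routine justification the authors intended.
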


\subsection{Numerical Results}

Tammes' Problem has been studied for 75 years, and many putatively optimal configurations are available.  Therefore, we attempted to produce packings whose maximum inner product $\mu$ fell within $10^{-5}$ of the best value tabulated by N.\ J.\ A.\ Sloane and his colleagues \cite{Slo:Sphere-Packing-Web}.  This resource draws from all the experimental and theoretical work on Tammes' Problem, and it should be considered the gold standard.

Our experimental setup echoes the setup for real projective packings.  We implemented the algorithms in Matlab, and we performed the following experiment for pairs $(d, N)$ with $d = 3,4,5$ and $N = 4, 5, \dots, 25$.  First, we computed the putatively optimal maximum inner product $\mu$ using the data from \cite{Slo:Sphere-Packing-Web}.  In each of 10 trials, we constructed a starting matrix using Algorithm \ref{alg:starting-point} with parameters $\tau = 0.9$ and $T = 10,000$.  Then, we executed the alternating projection, Algorithm \ref{alg:alt-proj}, with the calculated value of $\mu$ and the maximum number of iterations set to $T = 5000$.  We stopped the alternating projection in Step 4 if the iterate $\mtx{G}^{(t)}$ contained no off-diagonal entry greater than $\mu + 10^{-5}$ and proceeded with Step 6.  After 10 trials, we recorded the largest packing diameter attained, as well as the average value of the packing diameter.  We also recorded the average number of iterations the alternating projection required during each trial.

Table \ref{tab:sphere-packing} provides the results of this experiment.  The most striking feature of Table \ref{tab:sphere-packing} is that the best configurations returned by alternating projection consistently attain packing diameters that fall hundredths or thousandths of a degree away from the best packing diameters recorded by Sloane.  If we examine the maximum inner product in the configuration instead, the difference is usually on the order of $10^{-4}$ or $10^{-5}$, which we expect based on our stopping criterion.  The average-case results are somewhat worse.  Nevertheless, the average configuration returned by alternating projection typically attains a packing diameter only several tenths of a degree away from optimal.

A second observation is that the alternating projection algorithm typically performs better when the number of points $N$ is small.  The largest errors are all clustered at larger values of $N$.  A corollary observation is that the average number of iterations per trial tends to increase with the number of points.  We believe that the explanation for these phenomena is that Tammes' Problem has a combinatorial regime, where solutions have a lot of symmetry and structure, and a random regime, where the solutions have very little order.  The algorithm typically seems to perform better in the combinatorial regime, although it fails for certain unusually structured ensembles.

This claim is supported somewhat by theoretical results for $d = 3$.  Optimal configurations have only been established for $N = 1, 2, \dots, 12$ and $N = 24$.  Of these, the cases $N = 1, 2, 3$ are trivial.  The cases $N = 4, 6, 8, 12, 24$ fall from the vertices of various well-known polyhedra.  The cases $N = 5, 11$ are degenerate, obtained by leaving a point out of the solutions for $N = 6, 12$.  The remaining cases involve complicated constructions based on graphs \cite{EZ01:Spherical-Codes}.  The algorithm was able to calculate the known optimal configurations to a high order of accuracy, but it generally performed slightly better for the nondegenerate cases.

On the other hand, there is at least one case where the algorithm failed to match the optimal packing diameter, even though the optimal configuration is highly symmetric.  The best arrangement of 24 points on $\Sspace{3}$ locates them at vertices of a very special polytope called the 24-cell \cite{Slo:Sphere-Packing-Web}.  The best configuration produced by the algorithm has a packing diameter $1.79^\circ$ worse.  It seems that this optimal configuration is very difficult for the algorithm to find.  Less dramatic failures occurred at pairs $(d, N) = (3, 25)$, $(4, 14)$, $(4, 25)$, $(5, 22)$, and $(5, 23)$.  But in each of these cases, our best packing declined more than a tenth of a degree from the best recorded.

\newpage

\section{Tables and Figures} \label{app:tables}

Our experiments resulted in tables of packing diameters.  We did not store the configurations produced by the algorithm.  The Matlab code that produced these data is available on request from \texttt{jtropp@acm.caltech.edu}.

These tables and figures are intended only to describe the results of our experiments; it is likely that many of the packing diameters could be improved with additional effort.  In all cases, we present the results of calculations for the stated problem, even if we obtained a better packing by solving a different problem.  For example, a complex packing should always improve on the corresponding real packing.  If the numbers indicate otherwise, it just means that the complex experiment yielded an inferior result.  As a second example, the optimal packing diameter must not decrease as the number of points increases.  When the numbers indicate otherwise, it means that running the algorithm with more points yielded a better result than running it with fewer.  These failures may reflect the difficulty of various packing problems.

\listoftables
\listoffigures


\newpage

\begin{center}
\tablefirsthead{%
	\cline{3-8}
	\multicolumn{2}{c|}{} &
	\multicolumn{5}{|c|}{\textsc{Packing diameters (Degrees)}} &
	\multicolumn{1}{|c|}{\textsc{Iterations}} \\
	\hline
	$d$ & $N$ & NJAS & Best of 10 & \phantom{12} Error &
	Avg.\ of 10 & \phantom{12} Error & Avg.\ of 10 \\
	\hline\hline}
\tablehead{%
	\multicolumn{8}{l}{\textsl{\dots continued}} \\
	\multicolumn{8}{l}{} \\
	\cline{3-8}
	\multicolumn{2}{c|}{} &
	\multicolumn{5}{|c|}{\textsc{Packing diameters (Degrees)}} &
	\multicolumn{1}{|c|}{\textsc{Iterations}} \\
	\hline
	$d$ & $N$ & NJAS & Best of 10 & \phantom{12} Error &
	Avg.\ of 10 & \phantom{12} Error & Avg.\ of 10 \\
	\hline\hline}
\tabletail{%
	\hline
	\multicolumn{8}{r}{} \\
	\multicolumn{8}{r}{\textsl{continued\dots}} \\}
\tablelasttail{\hline}

\topcaption[Packing in real projective spaces]{\textsc{Packing in real projective spaces:}  For collections of $N$ points in the real projective space $\RPspace{d - 1}$, this table lists the best packing diameter (in degrees) and the average packing diameter (in degrees) obtained during ten random trials of the alternating projection algorithm.  The error columns record how far our results decline from the putative optimal packings (NJAS) reported in \cite{Slo:Grassmannian-Web}.  The last column gives the average number of iterations of alternating projection per trial before the termination condition is met.} \label{tab:real-projective}

\small
\begin{supertabular}{|cr|r|rr|rr|r|}
3	&	4	&	70.529	&	70.528	&	0.001	&	70.528	&	0.001	&	54	\\
3	&	5	&	63.435	&	63.434	&	0.001	&	63.434	&	0.001	&	171	\\
3	&	6	&	63.435	&	63.435	&	0.000	&	59.834	&	3.601	&	545	\\
3	&	7	&	54.736	&	54.735	&	0.001	&	54.735	&	0.001	&	341	\\
3	&	8	&	49.640	&	49.639	&	0.001	&	49.094	&	0.546	&	4333	\\
3	&	9	&	47.982	&	47.981	&	0.001	&	47.981	&	0.001	&	2265	\\
3	&	10	&	46.675	&	46.674	&	0.001	&	46.674	&	0.001	&	2657	\\
3	&	11	&	44.403	&	44.402	&	0.001	&	44.402	&	0.001	&	2173	\\
3	&	12	&	41.882	&	41.881	&	0.001	&	41.425	&	0.457	&	2941	\\
3	&	13	&	39.813	&	39.812	&	0.001	&	39.522	&	0.291	&	4870	\\
3	&	14	&	38.682	&	38.462	&	0.221	&	38.378	&	0.305	&	5000	\\
3	&	15	&	38.135	&	37.934	&	0.201	&	37.881	&	0.254	&	5000	\\
3	&	16	&	37.377	&	37.211	&	0.166	&	37.073	&	0.304	&	5000	\\
3	&	17	&	35.235	&	35.078	&	0.157	&	34.821	&	0.414	&	5000	\\
3	&	18	&	34.409	&	34.403	&	0.005	&	34.200	&	0.209	&	5000	\\
3	&	19	&	33.211	&	33.107	&	0.104	&	32.909	&	0.303	&	5000	\\
3	&	20	&	32.707	&	32.580	&	0.127	&	32.273	&	0.434	&	5000	\\
3	&	21	&	32.216	&	32.036	&	0.180	&	31.865	&	0.351	&	5000	\\
3	&	22	&	31.896	&	31.853	&	0.044	&	31.777	&	0.119	&	5000	\\
3	&	23	&	30.506	&	30.390	&	0.116	&	30.188	&	0.319	&	5000	\\
3	&	24	&	30.163	&	30.089	&	0.074	&	29.694	&	0.469	&	5000	\\
3	&	25	&	29.249	&	29.024	&	0.224	&	28.541	&	0.707	&	5000	\\
\hline\hline
4	&	5	&	75.522	&	75.522	&	0.001	&	73.410	&	2.113	&	4071	\\
4	&	6	&	70.529	&	70.528	&	0.001	&	70.528	&	0.001	&	91	\\
4	&	7	&	67.021	&	67.021	&	0.001	&	67.021	&	0.001	&	325	\\
4	&	8	&	65.530	&	65.530	&	0.001	&	64.688	&	0.842	&	3134	\\
4	&	9	&	64.262	&	64.261	&	0.001	&	64.261	&	0.001	&	1843	\\
4	&	10	&	64.262	&	64.261	&	0.001	&	64.261	&	0.001	&	803	\\
4	&	11	&	60.000	&	59.999	&	0.001	&	59.999	&	0.001	&	577	\\
4	&	12	&	60.000	&	59.999	&	0.001	&	59.999	&	0.001	&	146	\\
4	&	13	&	55.465	&	55.464	&	0.001	&	54.390	&	1.074	&	4629	\\
4	&	14	&	53.838	&	53.833	&	0.005	&	53.405	&	0.433	&	5000	\\
4	&	15	&	52.502	&	52.493	&	0.009	&	51.916	&	0.585	&	5000	\\
4	&	16	&	51.827	&	51.714	&	0.113	&	50.931	&	0.896	&	5000	\\
4	&	17	&	50.887	&	50.834	&	0.053	&	50.286	&	0.601	&	5000	\\
4	&	18	&	50.458	&	50.364	&	0.094	&	49.915	&	0.542	&	5000	\\
4	&	19	&	49.711	&	49.669	&	0.041	&	49.304	&	0.406	&	5000	\\
4	&	20	&	49.233	&	49.191	&	0.042	&	48.903	&	0.330	&	5000	\\
4	&	21	&	48.548	&	48.464	&	0.084	&	48.374	&	0.174	&	5000	\\
4	&	22	&	47.760	&	47.708	&	0.052	&	47.508	&	0.251	&	5000	\\
4	&	23	&	46.510	&	46.202	&	0.308	&	45.789	&	0.722	&	5000	\\
4	&	24	&	46.048	&	45.938	&	0.110	&	45.725	&	0.322	&	5000	\\
4	&	25	&	44.947	&	44.739	&	0.208	&	44.409	&	0.538	&	5000	\\
\hline\hline
5	&	6	&	78.463	&	78.463	&	0.001	&	77.359	&	1.104	&	3246	\\
5	&	7	&	73.369	&	73.368	&	0.001	&	73.368	&	0.001	&	1013	\\
5	&	8	&	70.804	&	70.803	&	0.001	&	70.604	&	0.200	&	5000	\\
5	&	9	&	70.529	&	70.528	&	0.001	&	69.576	&	0.953	&	2116	\\
5	&	10	&	70.529	&	70.528	&	0.001	&	67.033	&	3.496	&	3029	\\
5	&	11	&	67.254	&	67.254	&	0.001	&	66.015	&	1.239	&	4615	\\
5	&	12	&	67.021	&	66.486	&	0.535	&	65.661	&	1.361	&	5000	\\
5	&	13	&	65.732	&	65.720	&	0.012	&	65.435	&	0.297	&	5000	\\
5	&	14	&	65.724	&	65.723	&	0.001	&	65.637	&	0.087	&	3559	\\
5	&	15	&	65.530	&	65.492	&	0.038	&	65.443	&	0.088	&	5000	\\
5	&	16	&	63.435	&	63.434	&	0.001	&	63.434	&	0.001	&	940	\\
5	&	17	&	61.255	&	61.238	&	0.017	&	60.969	&	0.287	&	5000	\\
5	&	18	&	61.053	&	61.048	&	0.005	&	60.946	&	0.107	&	5000	\\
5	&	19	&	60.000	&	58.238	&	1.762	&	57.526	&	2.474	&	5000	\\
5	&	20	&	60.000	&	59.999	&	0.001	&	56.183	&	3.817	&	3290	\\
5	&	21	&	57.202	&	57.134	&	0.068	&	56.159	&	1.043	&	5000	\\
5	&	22	&	56.356	&	55.819	&	0.536	&	55.173	&	1.183	&	5000	\\
5	&	23	&	55.588	&	55.113	&	0.475	&	54.535	&	1.053	&	5000	\\
5	&	24	&	55.228	&	54.488	&	0.740	&	53.926	&	1.302	&	5000	\\
5	&	25	&	54.889	&	54.165	&	0.724	&	52.990	&	1.899	&	5000	\\
\end{supertabular}

\normalsize
\end{center}

\newpage

\begin{center}
\tablefirsthead{%
	\cline{3-5}
	\multicolumn{2}{c|}{} &
	\multicolumn{3}{|c|}{\textsc{Packing diameters (Degrees)}} \\
	\hline
	\multicolumn{1}{|c}{$d$} &
	\multicolumn{1}{c||}{$N$} &
	\multicolumn{1}{|c|}{\phantom{123} DHST} &
	\multicolumn{1}{|c||}{\phantom{123} Rankin} &
	Difference \\
	\hline\hline}
\tablehead{%
	\multicolumn{5}{l}{\textsl{\dots continued}} \\
	\multicolumn{5}{l}{} \\
	\cline{3-5}
	\multicolumn{2}{c|}{} &
	\multicolumn{3}{|c|}{\textsc{Packing diameters (Degrees)}} \\
	\hline
	\multicolumn{1}{|c}{$d$} &
	\multicolumn{1}{c||}{$N$} &
	\multicolumn{1}{|c|}{\phantom{123} DHST} &
	\multicolumn{1}{|c||}{\phantom{123} Rankin} &
	Difference \\
	\hline\hline}
\tabletail{%
	\hline
	\multicolumn{5}{r}{} \\
	\multicolumn{5}{r}{\textsl{continued\dots}} \\}
\tablelasttail{\hline}

\topcaption[Packing in complex projective spaces]{\textsc{Packing in complex projective spaces:}  We compare our best configurations (DHST) of $N$ points in the complex projective space $\CPspace{d -1}$ against the Rankin bound \eqref{eqn:rankin-chord}.  The packing diameter of an ensemble is measured as the acute angle (in degrees) between the closest pair of lines.  The final column shows how far our configurations fall short of the bound.} \label{tab:complex-projective}

\small
\begin{supertabular}{|rr||r|r||r|}
2 &	3	&	60.00	&	60.00	&	0.00	\\
2 &	4	&	54.74	&	54.74	&	0.00	\\
2 &	5	&	45.00	&	52.24	&	7.24	\\
2 &	6	&	45.00	&	50.77	&	5.77	\\
2 &	7	&	38.93	&	49.80	&	10.86	\\
2 &	8	&	37.41	&	49.11	&	11.69	\\
\hline\hline
3 &	4	&	70.53	&	70.53	&	0.00	\\
3 &	5	&	64.00	&	65.91	&	1.90	\\
3 &	6	&	63.44	&	63.43	&	0.00	\\
3 &	7	&	61.87	&	61.87	&	0.00	\\
3 &	8	&	60.00	&	60.79	&	0.79	\\
3 &	9	&	60.00	&	60.00	&	0.00	\\
3 &	10	&	54.73	&	59.39	&	4.66	\\
3 &	11	&	54.73	&	58.91	&	4.18	\\
3 &	12	&	54.73	&	58.52	&	3.79	\\
3 &	13	&	51.32	&	58.19	&	6.88	\\
3 &	14	&	50.13	&	57.92	&	7.79	\\
3 &	15	&	49.53	&	57.69	&	8.15	\\
3 &	16	&	49.53	&	57.49	&	7.95	\\
3 &	17	&	49.10	&	57.31	&	8.21	\\
3 &	18	&	48.07	&	57.16	&	9.09	\\
3 &	19	&	47.02	&	57.02	&	10.00	\\
3 &	20	&	46.58	&	56.90	&	10.32	\\
\hline\hline
4 &	5	&	75.52	&	75.52	&	0.00	\\
4 &	6	&	70.88	&	71.57	&	0.68	\\
4 &	7	&	69.29	&	69.30	&	0.01	\\
4 &	8	&	67.78	&	67.79	&	0.01	\\
4 &	9	&	66.21	&	66.72	&	0.51	\\
4 &	10	&	65.71	&	65.91	&	0.19	\\
4 &	11	&	64.64	&	65.27	&	0.63	\\
4 &	12	&	64.24	&	64.76	&	0.52	\\
4 &	13	&	64.34	&	64.34	&	0.00	\\
4 &	14	&	63.43	&	63.99	&	0.56	\\
4 &	15	&	63.43	&	63.69	&	0.26	\\
4 &	16	&	63.43	&	63.43	&	0.00	\\
4 &	17	&	59.84	&	63.21	&	3.37	\\
4 &	18	&	59.89	&	63.02	&	3.12	\\
4 &	19	&	60.00	&	62.84	&	2.84	\\
4 &	20	&	57.76	&	62.69	&	4.93	\\
\hline\hline
5 &	6	&	78.46	&	78.46	&	0.00	\\
5 &	7	&	74.52	&	75.04	&	0.51	\\
5 &	8	&	72.81	&	72.98	&	0.16	\\
5 &	9	&	71.24	&	71.57	&	0.33	\\
5 &	10	&	70.51	&	70.53	&	0.02	\\
5 &	11	&	69.71	&	69.73	&	0.02	\\
5 &	12	&	68.89	&	69.10	&	0.21	\\
5 &	13	&	68.19	&	68.58	&	0.39	\\
5 &	14	&	67.66	&	68.15	&	0.50	\\
5 &	15	&	67.37	&	67.79	&	0.43	\\
5 &	16	&	66.68	&	67.48	&	0.80	\\
5 &	17	&	66.53	&	67.21	&	0.68	\\
5 &	18	&	65.87	&	66.98	&	1.11	\\
5 &	19	&	65.75	&	66.77	&	1.02	\\
5 &	20	&	65.77	&	66.59	&	0.82	\\
5 &	21	&	65.83	&	66.42	&	0.60	\\
5 &	22	&	65.87	&	66.27	&	0.40	\\
5 &	23	&	65.90	&	66.14	&	0.23	\\
5 &	24	&	65.91	&	66.02	&	0.11	\\
5 &	25	&	65.91	&	65.91	&	0.00	\\
\end{supertabular}
\normalsize

\end{center}



\newpage

\begin{figure}
\begin{center}

\caption[Real and complex projective packings]{\textsc{Real and Complex Projective Packings:} These three graphs compare the packing diameters attained by configurations in real and complex projective spaces with $d = 3, 4, 5$.  The circles indicate the best real packings obtained by Sloane and his colleagues \cite{Slo:Grassmannian-Web}.  The crosses indicate the best complex packings produced by the authors.  Rankin's upper bound \eqref{eqn:rankin-chord} is depicted in gray.  The dashed vertical line marks the largest number of real lines for which the Rankin bound is attainable, while the solid vertical line marks the maximum number of complex lines for which the Rankin bound is attainable.} \label{fig:proj-packings}

\includegraphics[width=\textwidth]{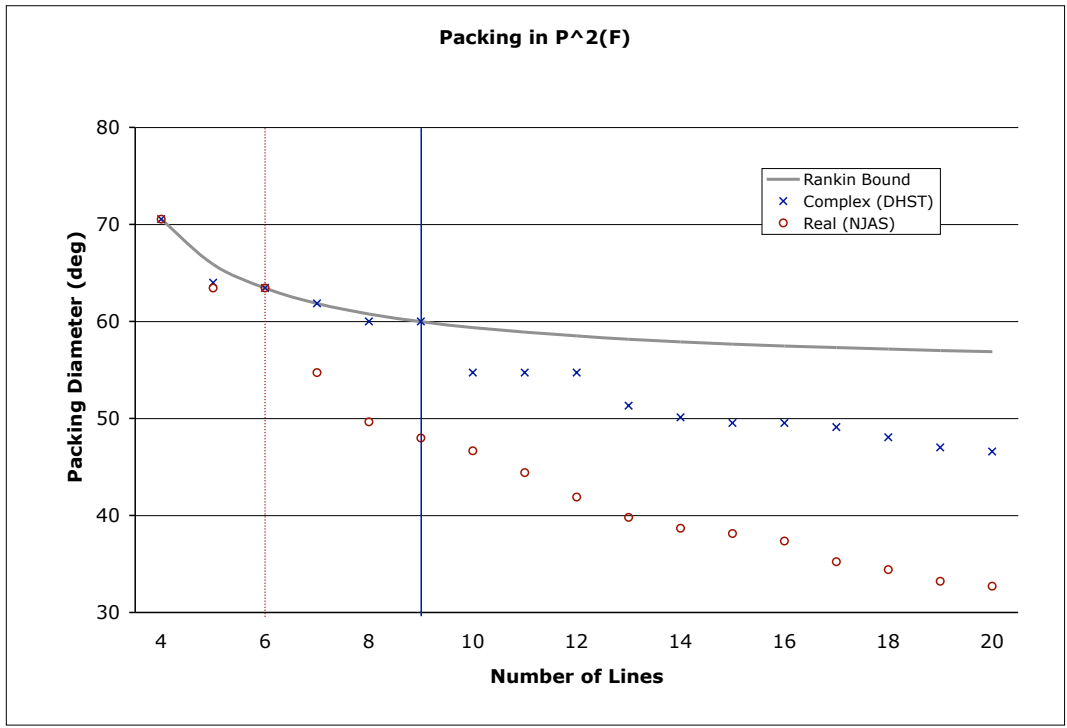}
\end{center}

\begin{flushright}
\textsl{continued\dots}
\end{flushright}

\end{figure}

\begin{figure}
\begin{flushleft}
\textsl{\dots continued}
\end{flushleft}

\vspace{-3pc}
\begin{center}
\includegraphics[width=\textwidth]{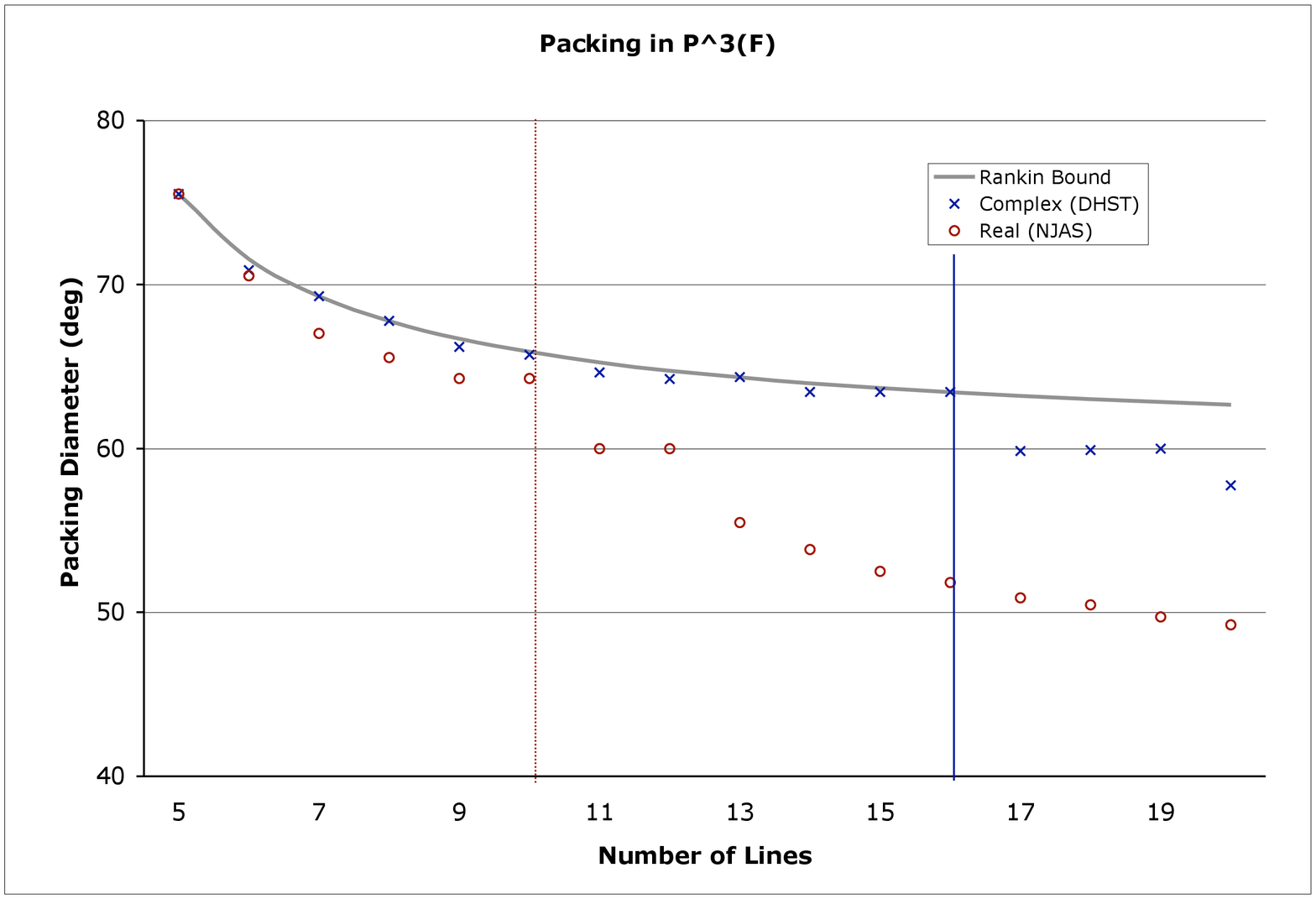} \\
\vspace{-4pc}
\includegraphics[width=\textwidth]{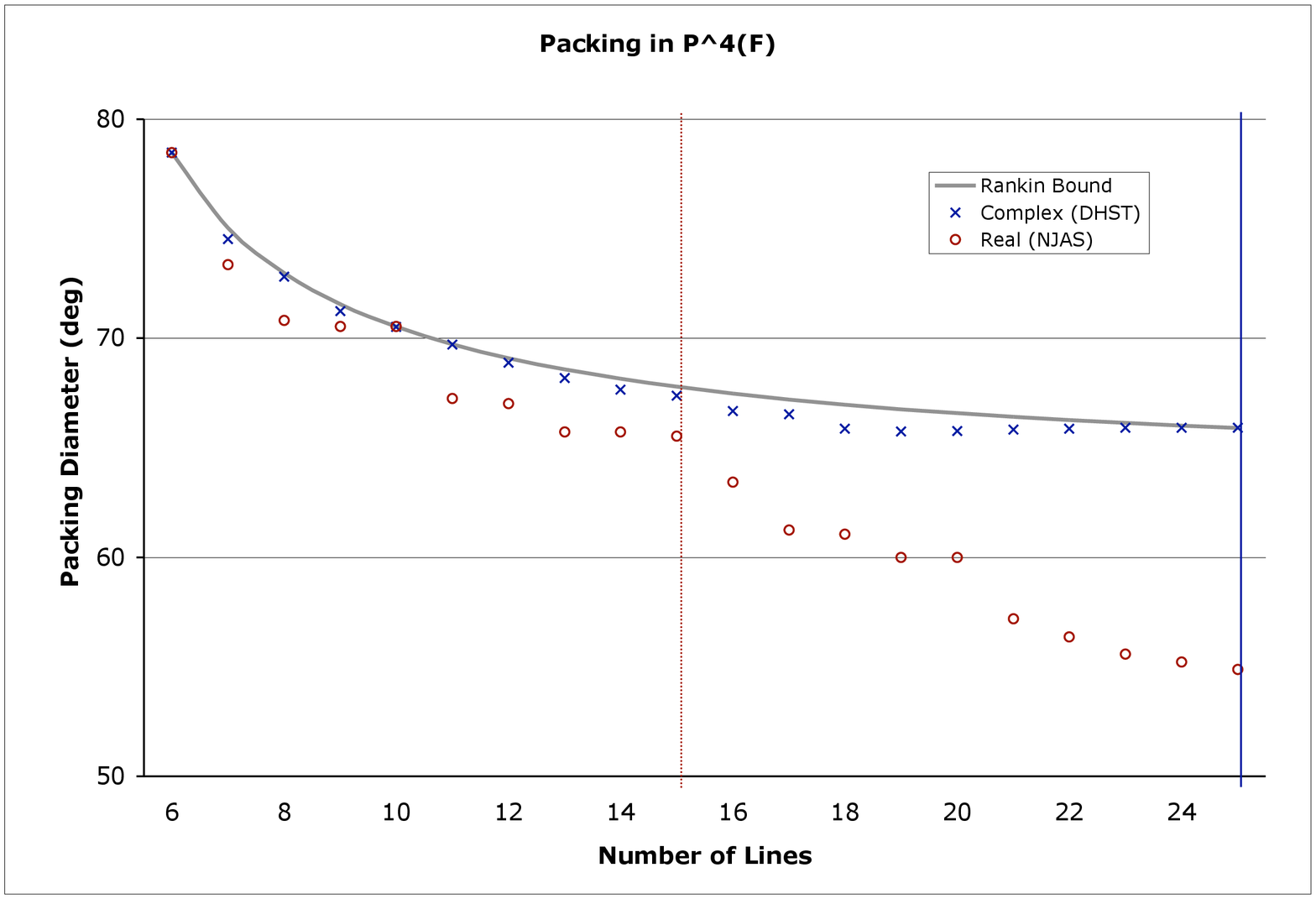}
\end{center}

\end{figure}

\cleardoublepage

\newpage

\begin{center}
\tablefirsthead{%
	\cline{4-6}
	\multicolumn{3}{c|}{} &
	\multicolumn{3}{|c|}{\textsc{Squared Packing diameters}} \\
	\hline
	\multicolumn{1}{|c}{$K$} &
	\multicolumn{1}{c}{$d$} &
	\multicolumn{1}{c||}{$N$} &
	\multicolumn{1}{|c|}{\phantom{123} DHST} &
	\multicolumn{1}{|c||}{\phantom{123} NJAS} &
	Difference \\
	\hline\hline}
\tablehead{%
	\multicolumn{6}{l}{\textsl{\dots continued}} \\
	\multicolumn{6}{l}{} \\
	\cline{4-6}
	\multicolumn{3}{c|}{} &
	\multicolumn{3}{|c|}{\textsc{Squared Packing diameters}} \\
	\hline
	\multicolumn{1}{|c}{$K$} &
	\multicolumn{1}{c}{$d$} &
	\multicolumn{1}{c||}{$N$} &
	\multicolumn{1}{|c|}{\phantom{123} DHST} &
	\multicolumn{1}{|c||}{\phantom{123} NJAS} &
	Difference \\
	\hline\hline}
\tabletail{%
	\hline
	\multicolumn{6}{r}{} \\
	\multicolumn{6}{r}{\textsl{continued\dots}} \\}
\tablelasttail{\hline}

\topcaption[Packing in real Grassmannians with chordal distance]{\textsc{Packing in real Grassmannians with chordal distance:}
We compare our best configurations (DHST) of $N$ points in $\RGspace{K}{d}$ against the best packings (NJAS) reported in \cite{Slo:Grassmannian-Web}.  The squared packing diameter is the squared chordal distance \eqref{eqn:chordal-distance} between the closest pair of subspaces.  The last column lists the difference between the columns (NJAS) and (DHST).}
\label{tab:real-chordal}

\small
\begin{supertabular}{|rrr||r|r||r|}
2 & 4 &	3	&	1.5000	&	1.5000	&	0.0000	\\
2 & 4 &	4	&	1.3333	&	1.3333	&	0.0000	\\
2 & 4 &	5	&	1.2500	&	1.2500	&	0.0000	\\
2 & 4 &	6	&	1.2000	&	1.2000	&	0.0000	\\
2 & 4 &	7	&	1.1656	&	1.1667	&	0.0011	\\
2 & 4 &	8	&	1.1423	&	1.1429	&	0.0005	\\
2 & 4 &	9	&	1.1226	&	1.1231	&	0.0004	\\
2 & 4 &	10	&	1.1111	&	1.1111	&	0.0000	\\
2 & 4 &	11	&	0.9981	&	1.0000	&	0.0019	\\
2 & 4 &	12	&	0.9990	&	1.0000	&	0.0010	\\
2 & 4 &	13	&	0.9996	&	1.0000	&	0.0004	\\
2 & 4 &	14	&	1.0000	&	1.0000	&	0.0000	\\
2 & 4 &	15	&	1.0000	&	1.0000	&	0.0000	\\
2 & 4 &	16	&	0.9999	&	1.0000	&	0.0001	\\
2 & 4 &	17	&	1.0000	&	1.0000	&	0.0000	\\
2 & 4 &	18	&	0.9992	&	1.0000	&	0.0008	\\
2 & 4 &	19	&	0.8873	&	0.9091	&	0.0218	\\
2 & 4 &	20	&	0.8225	&	0.9091	&	0.0866	\\
\hline\hline
2 & 5 &	3	&	1.7500	&	1.7500	&	0.0000	\\
2 & 5 &	4	&	1.6000	&	1.6000	&	0.0000	\\
2 & 5 &	5	&	1.5000	&	1.5000	&	0.0000	\\
2 & 5 &	6	&	1.4400	&	1.4400	&	0.0000	\\
2 & 5 &	7	&	1.4000	&	1.4000	&	0.0000	\\
2 & 5 &	8	&	1.3712	&	1.3714	&	0.0002	\\
2 & 5 &	9	&	1.3464	&	1.3500	&	0.0036	\\
2 & 5 &	10	&	1.3307	&	1.3333	&	0.0026	\\
2 & 5 &	11	&	1.3069	&	1.3200	&	0.0131	\\
2 & 5 &	12	&	1.2973	&	1.3064	&	0.0091	\\
2 & 5 &	13	&	1.2850	&	1.2942	&	0.0092	\\
2 & 5 &	14	&	1.2734	&	1.2790	&	0.0056	\\
2 & 5 &	15	&	1.2632	&	1.2707	&	0.0075	\\
2 & 5 &	16	&	1.1838	&	1.2000	&	0.0162	\\
2 & 5 &	17	&	1.1620	&	1.2000	&	0.0380	\\
2 & 5 &	18	&	1.1589	&	1.1909	&	0.0319	\\
2 & 5 &	19	&	1.1290	&	1.1761	&	0.0472	\\
2 & 5 &	20	&	1.0845	&	1.1619	&	0.0775	\\
\end{supertabular}
\normalsize

\end{center}




\newpage

\begin{center}
\tablefirsthead{%
	\cline{4-6}
	\multicolumn{3}{c|}{} &
	\multicolumn{3}{|c|}{\textsc{Squared Packing diameters}} \\
	\hline
	\multicolumn{1}{|c}{$K$} &
	\multicolumn{1}{c}{$d$} &
	\multicolumn{1}{c||}{$N$} &
	\multicolumn{1}{|c|}{\phantom{123} DHST} &
	\multicolumn{1}{|c||}{\phantom{123} Rankin} &
	Difference \\
	\hline\hline}
\tablehead{%
	\multicolumn{6}{l}{\textsl{\dots continued}} \\
	\multicolumn{6}{l}{} \\
	\cline{4-6}
	\multicolumn{3}{c|}{} &
	\multicolumn{3}{|c|}{\textsc{Squared Packing diameters}} \\
	\hline
	\multicolumn{1}{|c}{$K$} &
	\multicolumn{1}{c}{$d$} &
	\multicolumn{1}{c||}{$N$} &
	\multicolumn{1}{|c|}{\phantom{123} DHST} &
	\multicolumn{1}{|c||}{\phantom{123} Rankin} &
	Difference \\
	\hline\hline}
\tabletail{%
	\hline
	\multicolumn{6}{r}{} \\
	\multicolumn{6}{r}{\textsl{continued\dots}} \\}
\tablelasttail{\hline}

\topcaption[Packing in complex Grassmannians with chordal distance]{\textsc{Packing in complex Grassmannians with chordal distance:}
We compare our best configurations (DHST) of $N$ points in $\CGspace{K}{d}$ against the Rankin bound, equation \eqref{eqn:rankin-chord}.  The squared packing diameter is calculated as the squared chordal distance \eqref{eqn:chordal-distance} between the closest pair of subspaces.  The final column shows how much the computed ensemble declines from the Rankin bound.  When the bound is met, all pairs of subspaces are equidistant.} \label{tab:complex-chordal}

\small
\begin{supertabular}{|rrr||r|r||r|}
2 & 4 &	3	&	1.5000	&	1.5000	&	0.0000	\\
2 & 4 &	4	&	1.3333	&	1.3333	&	0.0000	\\
2 & 4 &	5	&	1.2500	&	1.2500	&	0.0000	\\
2 & 4 &	6	&	1.2000	&	1.2000	&	0.0000	\\
2 & 4 &	7	&	1.1667	&	1.1667	&	0.0000	\\
2 & 4 &	8	&	1.1429	&	1.1429	&	0.0000	\\
2 & 4 &	9	&	1.1250	&	1.1250	&	0.0000	\\
2 & 4 &	10	&	1.1111	&	1.1111	&	0.0000	\\
2 & 4 &	11	&	1.0999	&	1.1000	&	0.0001	\\
2 & 4 &	12	&	1.0906	&	1.0909	&	0.0003	\\
2 & 4 &	13	&	1.0758	&	1.0833	&	0.0076	\\
2 & 4 &	14	&	1.0741	&	1.0769	&	0.0029	\\
2 & 4 &	15	&	1.0698	&	1.0714	&	0.0016	\\
2 & 4 &	16	&	1.0658	&	1.0667	&	0.0009	\\
2 & 4 &	17	&	0.9975	&	1.0625	&	0.0650	\\
2 & 4 &	18	&	0.9934	&	1.0588	&	0.0654	\\
2 & 4 &	19	&	0.9868	&	1.0556	&	0.0688	\\
2 & 4 &	20	&	0.9956	&	1.0526	&	0.0571	\\
\hline\hline
2 & 5 &	3	&	1.7500	&	1.8000	&	0.0500	\\
2 & 5 &	4	&	1.6000	&	1.6000	&	0.0000	\\
2 & 5 &	5	&	1.5000	&	1.5000	&	0.0000	\\
2 & 5 &	6	&	1.4400	&	1.4400	&	0.0000	\\
2 & 5 &	7	&	1.4000	&	1.4000	&	0.0000	\\
2 & 5 &	8	&	1.3714	&	1.3714	&	0.0000	\\
2 & 5 &	9	&	1.3500	&	1.3500	&	0.0000	\\
2 & 5 &	10	&	1.3333	&	1.3333	&	0.0000	\\
2 & 5 &	11	&	1.3200	&	1.3200	&	0.0000	\\
2 & 5 &	12	&	1.3090	&	1.3091	&	0.0001	\\
2 & 5 &	13	&	1.3000	&	1.3000	&	0.0000	\\
2 & 5 &	14	&	1.2923	&	1.2923	&	0.0000	\\
2 & 5 &	15	&	1.2857	&	1.2857	&	0.0000	\\
2 & 5 &	16	&	1.2799	&	1.2800	&	0.0001	\\
2 & 5 &	17	&	1.2744	&	1.2750	&	0.0006	\\
2 & 5 &	18	&	1.2686	&	1.2706	&	0.0020	\\
2 & 5 &	19	&	1.2630	&	1.2667	&	0.0037	\\
2 & 5 &	20	&	1.2576	&	1.2632	&	0.0056	\\
\hline\hline
2 & 6 &	4	&	1.7778	&	1.7778	&	0.0000	\\
2 & 6 &	5	&	1.6667	&	1.6667	&	0.0000	\\
2 & 6 &	6	&	1.6000	&	1.6000	&	0.0000	\\
2 & 6 &	7	&	1.5556	&	1.5556	&	0.0000	\\
2 & 6 &	8	&	1.5238	&	1.5238	&	0.0000	\\
2 & 6 &	9	&	1.5000	&	1.5000	&	0.0000	\\
2 & 6 &	10	&	1.4815	&	1.4815	&	0.0000	\\
2 & 6 &	11	&	1.4667	&	1.4667	&	0.0000	\\
2 & 6 &	12	&	1.4545	&	1.4545	&	0.0000	\\
2 & 6 &	13	&	1.4444	&	1.4444	&	0.0000	\\
2 & 6 &	14	&	1.4359	&	1.4359	&	0.0000	\\
2 & 6 &	15	&	1.4286	&	1.4286	&	0.0000	\\
2 & 6 &	16	&	1.4221	&	1.4222	&	0.0001	\\
2 & 6 &	17	&	1.4166	&	1.4167	&	0.0000	\\
2 & 6 &	18	&	1.4118	&	1.4118	&	0.0000	\\
2 & 6 &	19	&	1.4074	&	1.4074	&	0.0000	\\
2 & 6 &	20	&	1.4034	&	1.4035	&	0.0001	\\
2 & 6 &	21	&	1.3999	&	1.4000	&	0.0001	\\
2 & 6 &	22	&	1.3968	&	1.3968	&	0.0001	\\
2 & 6 &	23	&	1.3923	&	1.3939	&	0.0017	\\
2 & 6 &	24	&	1.3886	&	1.3913	&	0.0028	\\
2 & 6 &	25	&	1.3862	&	1.3889	&	0.0027	\\
\hline\hline
3 & 6 &	3	&	2.2500	&	2.2500	&	0.0000	\\
3 & 6 &	4	&	2.0000	&	2.0000	&	0.0000	\\
3 & 6 &	5	&	1.8750	&	1.8750	&	0.0000	\\
3 & 6 &	6	&	1.8000	&	1.8000	&	0.0000	\\
3 & 6 &	7	&	1.7500	&	1.7500	&	0.0000	\\
3 & 6 &	8	&	1.7143	&	1.7143	&	0.0000	\\
3 & 6 &	9	&	1.6875	&	1.6875	&	0.0000	\\
3 & 6 &	10	&	1.6667	&	1.6667	&	0.0000	\\
3 & 6 &	11	&	1.6500	&	1.6500	&	0.0000	\\
3 & 6 &	12	&	1.6363	&	1.6364	&	0.0001	\\
3 & 6 &	13	&	1.6249	&	1.6250	&	0.0001	\\
3 & 6 &	14	&	1.6153	&	1.6154	&	0.0000	\\
3 & 6 &	15	&	1.6071	&	1.6071	&	0.0000	\\
3 & 6 &	16	&	1.5999	&	1.6000	&	0.0001	\\
3 & 6 &	17	&	1.5936	&	1.5938	&	0.0001	\\
3 & 6 &	18	&	1.5879	&	1.5882	&	0.0003	\\
3 & 6 &	19	&	1.5829	&	1.5833	&	0.0004	\\
3 & 6 &	20	&	1.5786	&	1.5789	&	0.0004	\\
3 & 6 &	21	&	1.5738	&	1.5750	&	0.0012	\\
3 & 6 &	22	&	1.5687	&	1.5714	&	0.0028	\\
3 & 6 &	23	&	1.5611	&	1.5682	&	0.0070	\\
3 & 6 &	24	&	1.5599	&	1.5652	&	0.0053	\\
3 & 6 &	25	&	1.5558	&	1.5625	&	0.0067	\\
3 & 6 &	26	&	1.5542	&	1.5600	&	0.0058	\\
3 & 6 &	27	&	1.5507	&	1.5577	&	0.0070	\\
3 & 6 &	28	&	1.5502	&	1.5556	&	0.0054	\\
3 & 6 &	29	&	1.5443	&	1.5536	&	0.0092	\\
3 & 6 &	30	&	1.5316	&	1.5517	&	0.0201	\\
3 & 6 &	31	&	1.5283	&	1.5500	&	0.0217	\\
3 & 6 &	32	&	1.5247	&	1.5484	&	0.0237	\\
3 & 6 &	33	&	1.5162	&	1.5469	&	0.0307	\\
3 & 6 &	34	&	1.5180	&	1.5455	&	0.0274	\\
3 & 6 &	35	&	1.5141	&	1.5441	&	0.0300	\\
3 & 6 &	36	&	1.5091	&	1.5429	&	0.0338	\\
\end{supertabular}
\normalsize

\end{center}



\newpage

\begin{figure}
\caption[Packing in Grassmannians with chordal distance]{\textsc{Packing in Grassmannians with chordal distance:}  This figure shows the packing diameters of $N$ points in the Grassmannian $\FGspace{K}{d}$ equipped with the chordal distance.  The circles indicate the best real packings ($\mathbb{F} = \Rspace{}$) obtained by Sloane and his colleagues \cite{Slo:Grassmannian-Web}.  The crosses indicate the best complex packings ($\mathbb{F} = \Cspace{}$) produced by the authors.  Rankin's upper bound \eqref{eqn:rankin-chord} appears in gray.  The dashed vertical line marks the largest number of real subspaces for which the Rankin bound is attainable, while the solid vertical line marks the maximum number of complex subspaces for which the Rankin bound is attainable.} \label{fig:chord-packings}

\begin{center}
\includegraphics[width=\textwidth]{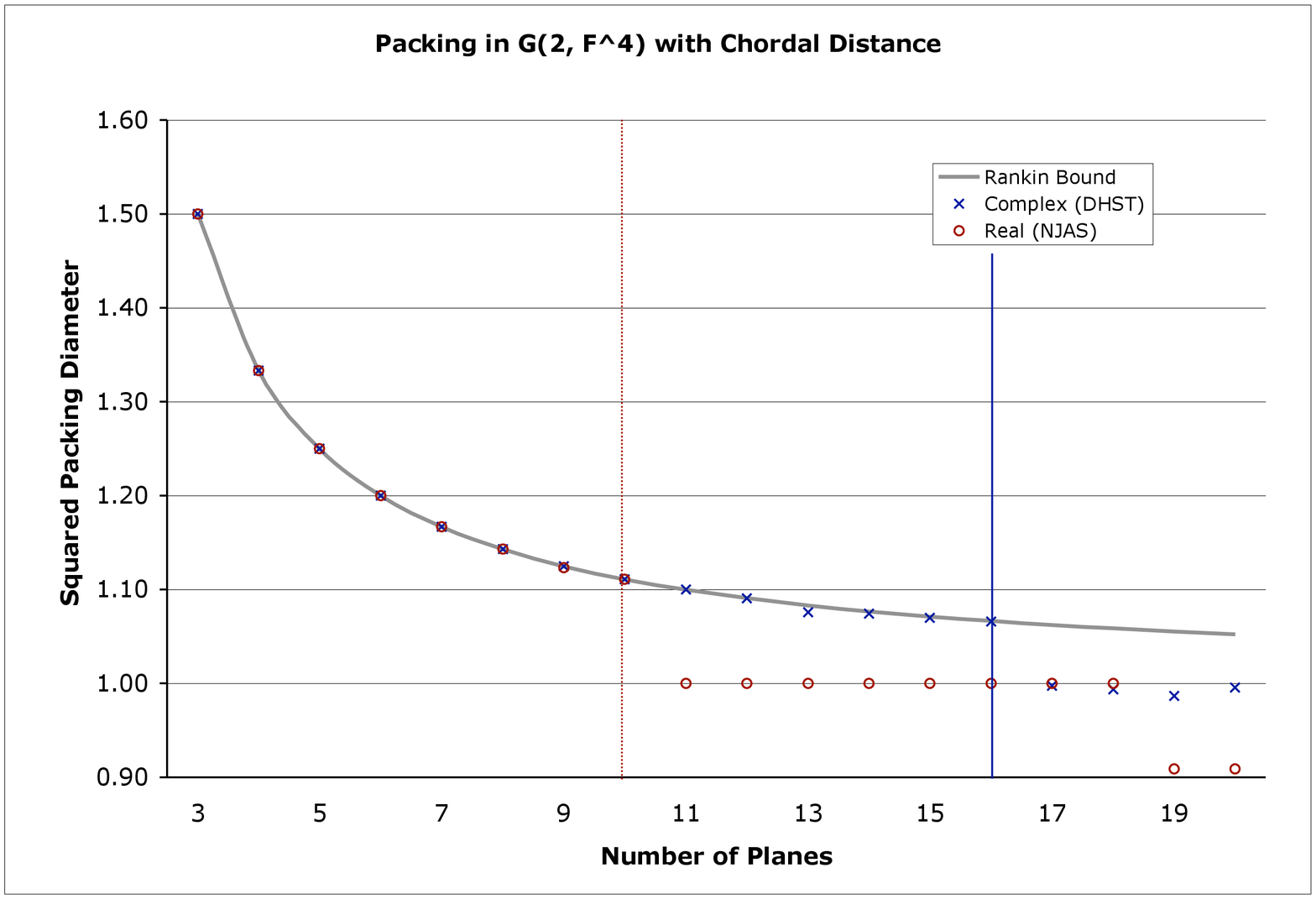}
\end{center}

\begin{flushright}
\textsl{continued\dots}
\end{flushright}
\end{figure}

\newpage

\begin{figure}
\begin{flushleft}
\textsl{\dots continued}
\end{flushleft}

\vspace{-3pc}
\begin{center}
\includegraphics[width=\textwidth]{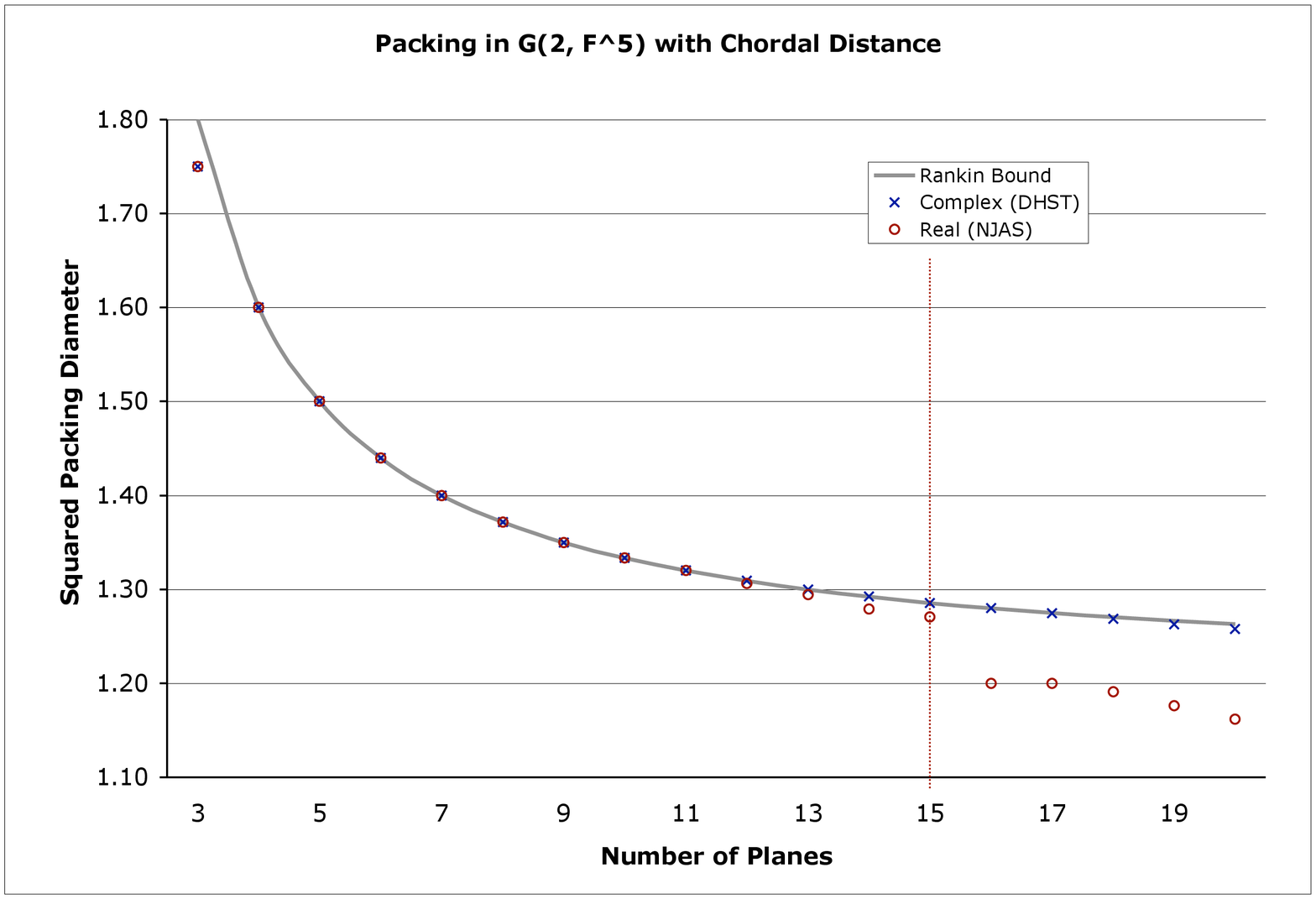} \\
\vspace{-4pc}
\includegraphics[width=\textwidth]{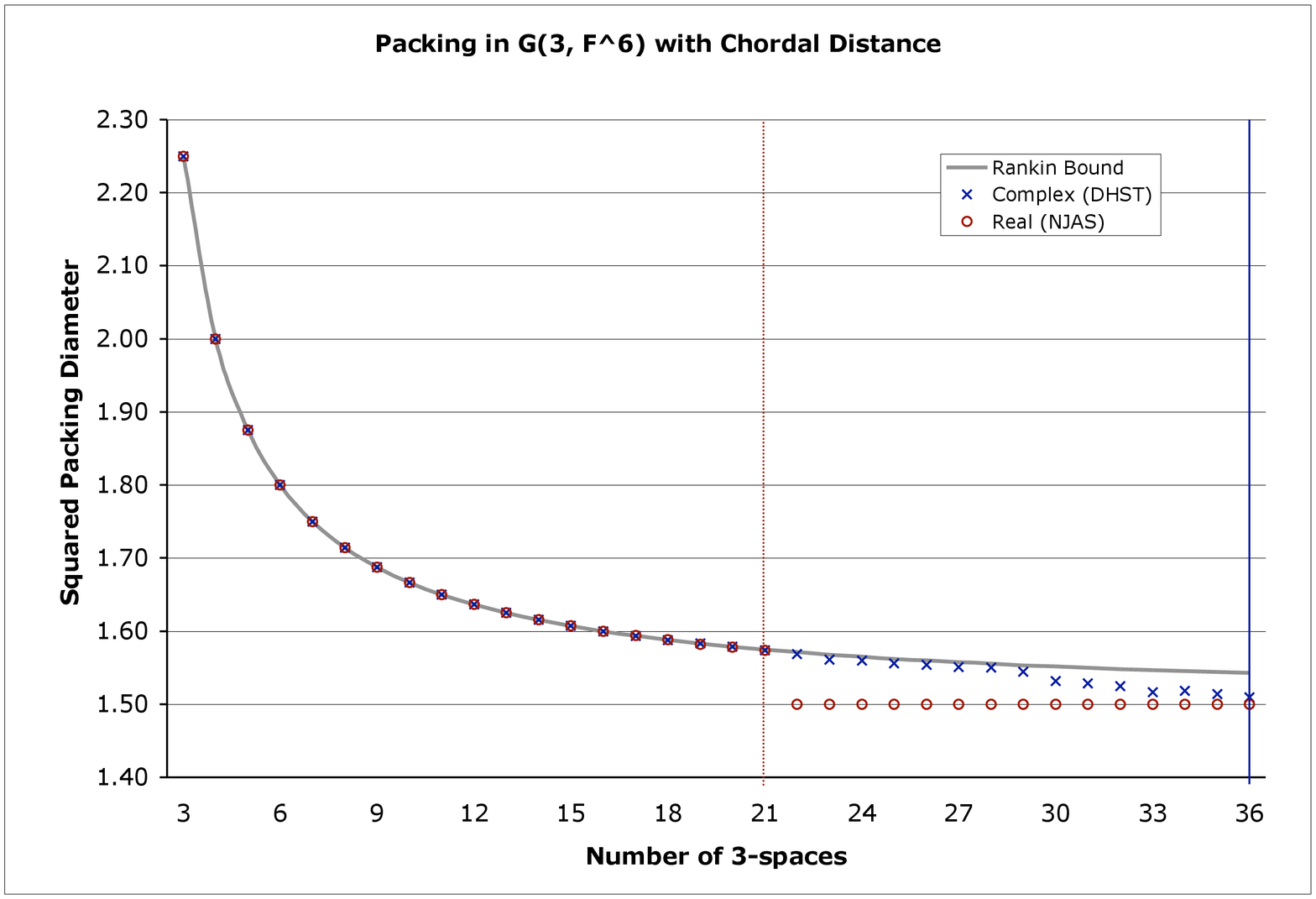}
\end{center}

\end{figure}

\clearpage



\newpage

\begin{center}
\tablefirsthead{%
	\cline{4-8}
	\multicolumn{3}{c|}{} &
	\multicolumn{5}{|c|}{\textsc{Squared Packing diameters}} \\
	\hline
	$d$ & $K$ & $N$ & Rankin &
	\multicolumn{1}{|c}{\phantom{123} $\Rspace{}$ \phantom{123}} & Difference &
	\multicolumn{1}{|c}{\phantom{123} $\Cspace{}$ \phantom{123}} & Difference \\
	\hline\hline}
\tablehead{%
	\multicolumn{8}{l}{\textsl{\dots continued}} \\
	\multicolumn{8}{l}{} \\
	\cline{4-8}
	\multicolumn{3}{c|}{} &
	\multicolumn{5}{|c|}{\textsc{Squared Packing diameters}} \\
	\hline
	$d$ & $K$ & $N$ & Rankin &
	\multicolumn{1}{|c}{\phantom{123} $\Rspace{}$ \phantom{123}} & Difference &
	\multicolumn{1}{|c}{\phantom{123} $\Cspace{}$ \phantom{123}} & Difference \\
	\hline\hline}
\tabletail{%
	\hline
	\multicolumn{8}{r}{} \\
	\multicolumn{8}{r}{\textsl{continued\dots}} \\}
\tablelasttail{\hline}

\topcaption[Packing in Grassmannians with spectral distance]{\textsc{Packing in Grassmannians with spectral distance:}  We compare our best real ($\mathbb{F} = \Rspace{}$) and complex ($\mathbb{F} = \Cspace{}$) packings in $\FGspace{K}{d}$ against the Rankin bound, equation \eqref{eqn:rankin-spec}.  The squared packing diameter of a configuration is the squared spectral distance \eqref{eqn:spectral-distance} between the closest pair of subspaces.  When the Rankin bound is met, all pairs of subspaces are equi-isoclinic.  The algorithm failed to produce any configurations of 8 or 9 subspaces in $\RGspace{3}{6}$ with nontrivial packing diameters.} \label{tab:projection-dist}

\small
\begin{supertabular}{|rrr|r|rr|rr|}
4	&	2	&	3	&	0.7500	&	0.7500	&	0.0000	&	0.7500	&	0.0000	\\
4	&	2	&	4	&	0.6667	&	0.6667	&	0.0000	&	0.6667	&	0.0000	\\
4	&	2	&	5	&	0.6250	&	0.5000	&	0.1250	&	0.6250	&	0.0000	\\
4	&	2	&	6	&	0.6000	&	0.4286	&	0.1714	&	0.6000	&	0.0000	\\
4	&	2	&	7	&	0.5833	&	0.3122	&	0.2712	&	0.5000	&	0.0833	\\
4	&	2	&	8	&	0.5714	&	0.2851	&	0.2863	&	0.4374	&	0.1340	\\
4	&	2	&	9	&	0.5625	&	0.2544	&	0.3081	&	0.4363	&	0.1262	\\
4	&	2	&	10	&	0.5556	&	0.2606	&	0.2950	&	0.4375	&	0.1181	\\
\hline\hline
5	&	2	&	3	&	0.9000	&	0.7500	&	0.1500	&	0.7500	&	0.1500	\\
5	&	2	&	4	&	0.8000	&	0.7500	&	0.0500	&	0.7500	&	0.0500	\\
5	&	2	&	5	&	0.7500	&	0.6700	&	0.0800	&	0.7497	&	0.0003	\\
5	&	2	&	6	&	0.7200	&	0.6014	&	0.1186	&	0.6637	&	0.0563	\\
5	&	2	&	7	&	0.7000	&	0.5596	&	0.1404	&	0.6667	&	0.0333	\\
5	&	2	&	8	&	0.6857	&	0.4991	&	0.1867	&	0.6060	&	0.0798	\\
5	&	2	&	9	&	0.6750	&	0.4590	&	0.2160	&	0.5821	&	0.0929	\\
5	&	2	&	10	&	0.6667	&	0.4615	&	0.2052	&	0.5196	&	0.1470	\\
\hline\hline
6	&	2	&	4	&	0.8889	&	0.8889	&	0.0000	&	0.8889	&	0.0000	\\
6	&	2	&	5	&	0.8333	&	0.7999	&	0.0335	&	0.8333	&	0.0000	\\
6	&	2	&	6	&	0.8000	&	0.8000	&	0.0000	&	0.8000	&	0.0000	\\
6	&	2	&	7	&	0.7778	&	0.7500	&	0.0278	&	0.7778	&	0.0000	\\
6	&	2	&	8	&	0.7619	&	0.7191	&	0.0428	&	0.7597	&	0.0022	\\
6	&	2	&	9	&	0.7500	&	0.6399	&	0.1101	&	0.7500	&	0.0000	\\
6	&	2	&	10	&	0.7407	&	0.6344	&	0.1064	&	0.7407	&	0.0000	\\
6	&	2	&	11	&	0.7333	&	0.6376	&	0.0958	&	0.7333	&	0.0000	\\
6	&	2	&	12	&	0.7273	&	0.6214	&	0.1059	&	0.7273	&	0.0000	\\
\hline
6	&	3	&	3	&	0.7500	&	0.7500	&	0.0000	&	0.7500	&	0.0000	\\
6	&	3	&	4	&	0.6667	&	0.5000	&	0.1667	&	0.6667	&	0.0000	\\
6	&	3	&	5	&	0.6250	&	0.4618	&	0.1632	&	0.4999	&	0.1251	\\
6	&	3	&	6	&	0.6000	&	0.4238	&	0.1762	&	0.5000	&	0.1000	\\
6	&	3	&	7	&	0.5833	&	0.3590	&	0.2244	&	0.4408	&	0.1426	\\
6	&	3	&	8	&	0.5714	&	---	&	---	&	0.4413	&	0.1301	\\
6	&	3	&	9	&	0.5625	&	---	&	---	&	0.3258	&	0.2367	\\
\end{supertabular}
\normalsize

\end{center}

\newpage

\begin{figure}
\caption[Packing in Grassmannians with spectral distance]{\textsc{Packing in Grassmannians with spectral distance:} This figure shows the packing diameters of $N$ points in the Grassmannian $\FGspace{K}{d}$ equipped with the spectral distance.  The circles indicate the best real packings ($\mathbb{F} = \Rspace{}$) obtained by the authors, while the crosses indicate the best complex packings ($\mathbb{F} = \Cspace{}$) obtained.  The Rankin bound \eqref{eqn:rankin-spec} is depicted in gray.  The dashed vertical line marks an upper bound on largest number of real subspaces for which the Rankin bound is attainable according to Theorem \ref{thm:ls-isoclinic}.} \label{fig:spec-packings}

\begin{center}
\includegraphics[width=\textwidth]{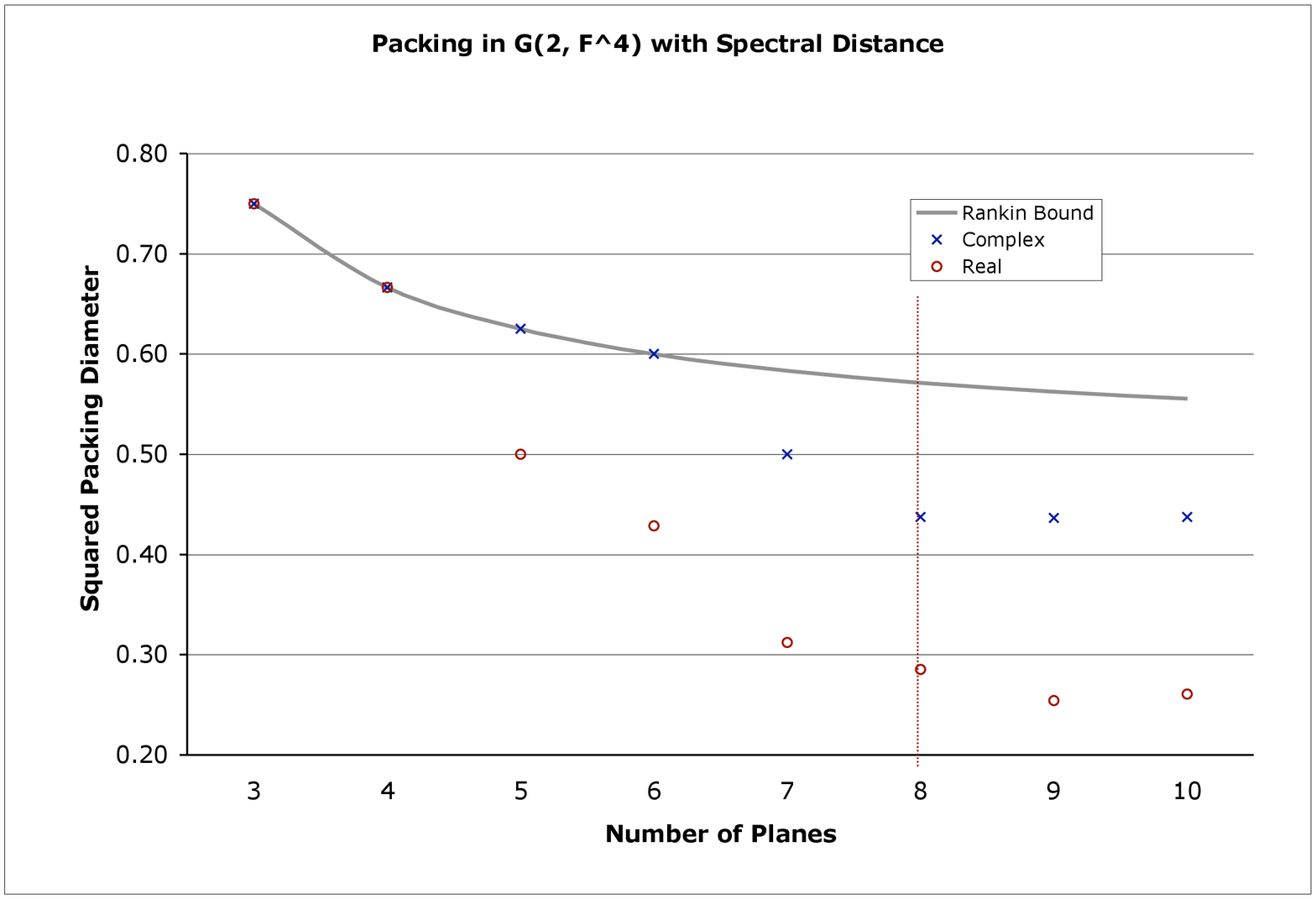}
\end{center}

\begin{flushright}
\textsl{continued\dots}
\end{flushright}

\end{figure}

\newpage

\begin{figure}
\begin{flushleft}
\textsl{\dots continued}
\end{flushleft}

\vspace{-3pc}
\begin{center}
\includegraphics[width=\textwidth]{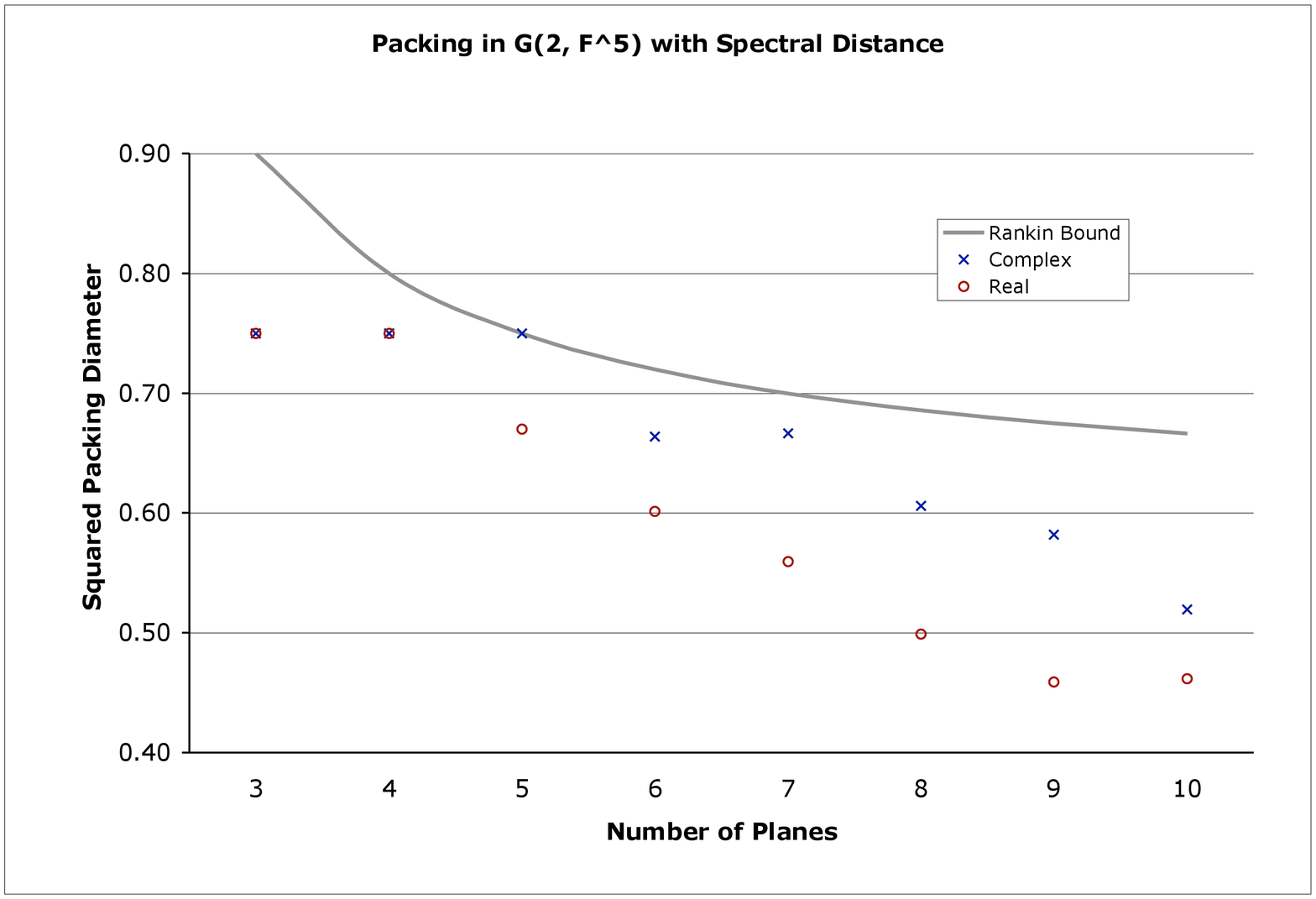} \\
\vspace{-4pc}
\includegraphics[width=\textwidth]{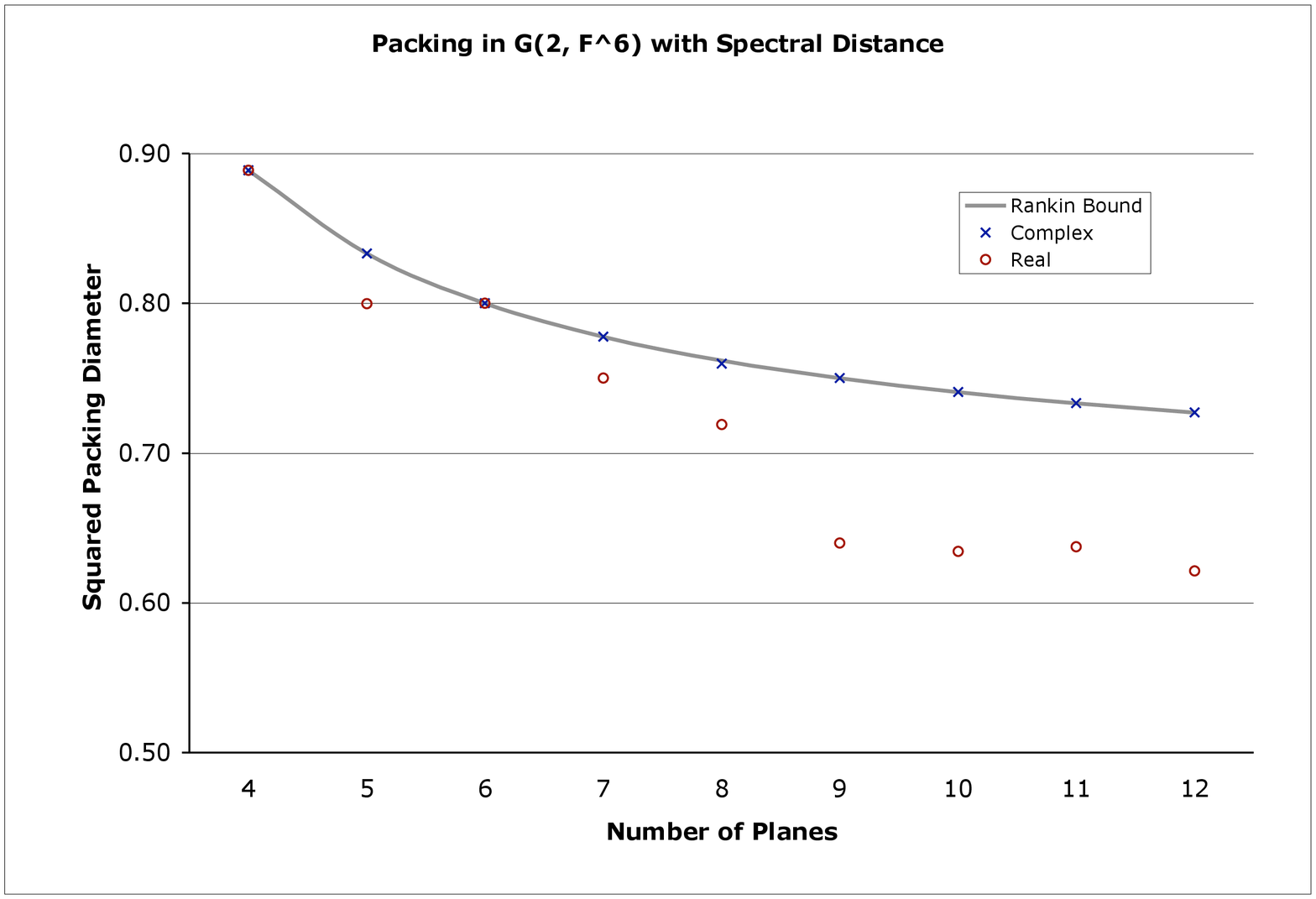} \\
\end{center}

\end{figure}

\clearpage



\newpage

\begin{center}
\tablefirsthead{%
	\cline{4-5}
	\multicolumn{3}{c|}{} &
	\multicolumn{2}{|c|}{\textsc{Squared Packing diameters}} \\
	\hline
	$d$ & $K$ & $N$ &
	\multicolumn{1}{|c|}{\phantom{1234} $\Rspace{}$ \phantom{1234}} &
	\multicolumn{1}{|c|}{$\Cspace{}$} \\
	\hline\hline}
\tabletail{%
	\hline
	\multicolumn{5}{r}{} \\
	\multicolumn{5}{r}{\textsl{continued\dots}} \\}
\tablelasttail{\hline}

\topcaption[Packing in Grassmannians with Fubini--Study distance]{\textsc{Packing in Grassmannians with Fubini--Study distance:}  Our best real packings ($\mathbb{F} = \Rspace{}$) compared with our best complex packings ($\mathbb{F} = \Cspace{}$) in the space $\FGspace{K}{d}$.  The packing diameter of a configuration is the Fubini--Study distance \eqref{eqn:fs-distance} between the closest pair of subspaces.  Note that we have scaled the distance by $2/\pi$ so that it ranges between zero and one.} \label{tab:fubini-study}

\small
\begin{supertabular}{|rrr||r|r|}
2 & 4 &	3	&	1.0000	&	1.0000	\\
2 & 4 &	4	&	1.0000	&	1.0000	\\
2 & 4 &	5	&	1.0000	&	1.0000	\\
2 & 4 &	6	&	1.0000	&	1.0000	\\
2 & 4 &	7	&	0.8933	&	0.8933	\\
2 & 4 &	8	&	0.8447	&	0.8559	\\
2 & 4 &	9	&	0.8196	&	0.8325	\\
2 & 4 &	10	&	0.8176	&	0.8216	\\
2 & 4 &	11	&	0.7818	&	0.8105	\\
2 & 4 &	12	&	0.7770	&	0.8033	\\
\hline\hline
2 & 5 &	3	&	1.0000	&	1.0000	\\
2 & 5 &	4	&	1.0000	&	1.0000	\\
2 & 5 &	5	&	1.0000	&	1.0000	\\
2 & 5 &	6	&	0.9999	&	1.0000	\\
2 & 5 &	7	&	1.0000	&	0.9999	\\
2 & 5 &	8	&	1.0000	&	0.9999	\\
2 & 5 &	9	&	1.0000	&	1.0000	\\
2 & 5 &	10	&	0.9998	&	1.0000	\\
2 & 5 &	11	&	0.9359	&	0.9349	\\
2 & 5 &	12	&	0.9027	&	0.9022	\\
\hline
\end{supertabular}
\normalsize

\end{center}



\newpage

\begin{figure}
\caption[Packing in Grassmannians with Fubini--Study distance]{\textsc{Packing in Grassmannians with Fubini--Study distance:}  This figure shows the packing diameters of $N$ points in the Grassmannian $\FGspace{K}{d}$ equipped with the Fubini--Study distance.  The circles indicate the best real packings ($\mathbb{F} = \Rspace{}$) obtained by the authors, while the crosses indicate the best complex packings ($\mathbb{F} = \Cspace{}$) obtained.} \label{fig:fs-packings}

\begin{center}
\includegraphics[width=\textwidth]{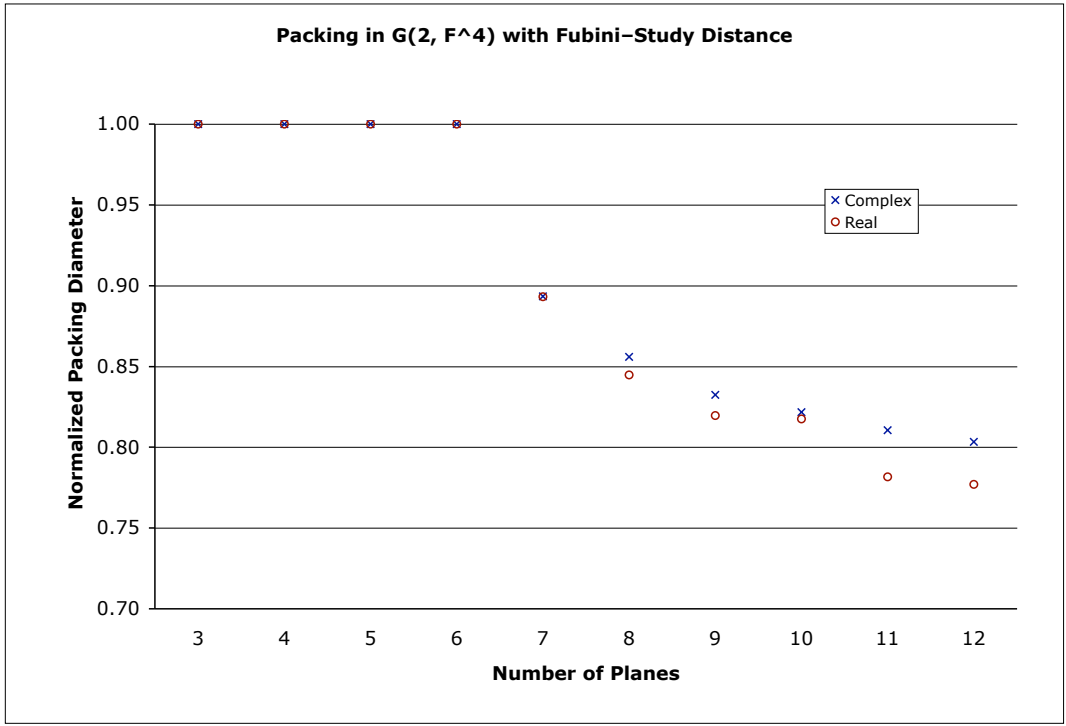} \\
\end{center}
\end{figure}




\clearpage

\newpage

\begin{center}
\tablefirsthead{%
	\cline{3-8}
	\multicolumn{2}{c|}{} &
	\multicolumn{5}{|c|}{\textsc{Packing diameters (Degrees)}} &
	\multicolumn{1}{|c|}{\textsc{Iterations}} \\
	\hline
	$d$ & $N$ & NJAS & Best of 10 & \phantom{12} Error &
	Avg.\ of 10 & \phantom{12} Error & Avg.\ of 10 \\
	\hline\hline}
\tablehead{%
	\multicolumn{8}{l}{\textsl{\dots continued}} \\
	\multicolumn{8}{l}{} \\
	\cline{3-8}
	\multicolumn{2}{c|}{} &
	\multicolumn{5}{|c|}{\textsc{Packing diameters (Degrees)}} &
	\multicolumn{1}{|c|}{\textsc{Iterations}} \\
	\hline
	$d$ & $N$ & NJAS & Best of 10 & \phantom{12} Error &
	Avg.\ of 10 & \phantom{12} Error & Avg.\ of 10 \\
	\hline\hline}
\tabletail{%
	\hline
	\multicolumn{8}{r}{} \\
	\multicolumn{8}{r}{\textsl{continued\dots}} \\}
\tablelasttail{\hline}

\topcaption[Packing on spheres]{\textsc{Packing on spheres:}  For collections of $N$ points on the $(d - 1)$-dimensional sphere, this table lists the best packing diameter and the average packing diameter obtained during ten random trials of the alternating projection algorithm.  The error columns record how far our results decline from the putative optimal packings (NJAS) reported in \cite{Slo:Sphere-Packing-Web}.  The last column gives the average number of iterations of alternating projection per trial.} \label{tab:sphere-packing}

\small
\begin{supertabular}{|cr|r|rr|rr|r|}
	3	&	4	&	109.471	&	109.471	&	0.001	&	109.471	&	0.001	&	45	\\
	3	&	5	&	90.000	&	90.000	&	0.000	&	89.999	&	0.001	&	130	\\
	3	&	6	&	90.000	&	90.000	&	0.000	&	90.000	&	0.000	&	41	\\
	3	&	7	&	77.870	&	77.869	&	0.001	&	77.869	&	0.001	&	613	\\
	3	&	8	&	74.858	&	74.858	&	0.001	&	74.858	&	0.001	&	328	\\
	3	&	9	&	70.529	&	70.528	&	0.001	&	70.528	&	0.001	&	814	\\
	3	&	10	&	66.147	&	66.140	&	0.007	&	66.010	&	0.137	&	5000	\\
	3	&	11	&	63.435	&	63.434	&	0.001	&	63.434	&	0.001	&	537	\\
	3	&	12	&	63.435	&	63.434	&	0.001	&	63.434	&	0.001	&	209	\\
	3	&	13	&	57.137	&	57.136	&	0.001	&	56.571	&	0.565	&	4876	\\
	3	&	14	&	55.671	&	55.670	&	0.001	&	55.439	&	0.232	&	3443	\\
	3	&	15	&	53.658	&	53.620	&	0.038	&	53.479	&	0.178	&	5000	\\
	3	&	16	&	52.244	&	52.243	&	0.001	&	51.665	&	0.579	&	4597	\\
	3	&	17	&	51.090	&	51.084	&	0.007	&	51.071	&	0.019	&	5000	\\
	3	&	18	&	49.557	&	49.548	&	0.008	&	49.506	&	0.050	&	5000	\\
	3	&	19	&	47.692	&	47.643	&	0.049	&	47.434	&	0.258	&	5000	\\
	3	&	20	&	47.431	&	47.429	&	0.002	&	47.254	&	0.177	&	5000	\\
	3	&	21	&	45.613	&	45.576	&	0.037	&	45.397	&	0.217	&	5000	\\
	3	&	22	&	44.740	&	44.677	&	0.063	&	44.123	&	0.617	&	5000	\\
	3	&	23	&	43.710	&	43.700	&	0.009	&	43.579	&	0.131	&	5000	\\
	3	&	24	&	43.691	&	43.690	&	0.001	&	43.689	&	0.002	&	3634	\\
	3	&	25	&	41.634	&	41.458	&	0.177	&	41.163	&	0.471	&	5000	\\
\hline\hline
	4	&	5	&	104.478	&	104.478	&	0.000	&	104.267	&	0.211	&	2765	\\
	4	&	6	&	90.000	&	90.000	&	0.000	&	89.999	&	0.001	&	110	\\
	4	&	7	&	90.000	&	89.999	&	0.001	&	89.999	&	0.001	&	483	\\
	4	&	8	&	90.000	&	90.000	&	0.000	&	89.999	&	0.001	&	43	\\
	4	&	9	&	80.676	&	80.596	&	0.081	&	80.565	&	0.111	&	5000	\\
	4	&	10	&	80.406	&	80.405	&	0.001	&	77.974	&	2.432	&	2107	\\
	4	&	11	&	76.679	&	76.678	&	0.001	&	75.881	&	0.798	&	2386	\\
	4	&	12	&	75.522	&	75.522	&	0.001	&	74.775	&	0.748	&	3286	\\
	4	&	13	&	72.104	&	72.103	&	0.001	&	71.965	&	0.139	&	4832	\\
	4	&	14	&	71.366	&	71.240	&	0.126	&	71.184	&	0.182	&	5000	\\
	4	&	15	&	69.452	&	69.450	&	0.002	&	69.374	&	0.078	&	5000	\\
	4	&	16	&	67.193	&	67.095	&	0.098	&	66.265	&	0.928	&	5000	\\
	4	&	17	&	65.653	&	65.652	&	0.001	&	64.821	&	0.832	&	4769	\\
	4	&	18	&	64.987	&	64.987	&	0.001	&	64.400	&	0.587	&	4713	\\
	4	&	19	&	64.262	&	64.261	&	0.001	&	64.226	&	0.036	&	4444	\\
	4	&	20	&	64.262	&	64.261	&	0.001	&	64.254	&	0.008	&	3738	\\
	4	&	21	&	61.876	&	61.864	&	0.012	&	61.570	&	0.306	&	5000	\\
	4	&	22	&	60.140	&	60.084	&	0.055	&	59.655	&	0.485	&	5000	\\
	4	&	23	&	60.000	&	59.999	&	0.001	&	58.582	&	1.418	&	4679	\\
	4	&	24	&	60.000	&	58.209	&	1.791	&	57.253	&	2.747	&	5000	\\
	4	&	25	&	57.499	&	57.075	&	0.424	&	56.871	&	0.628	&	5000	\\
\hline\hline
	5	&	6	&	101.537	&	101.536	&	0.001	&	95.585	&	5.952	&	4056	\\
	5	&	7	&	90.000	&	89.999	&	0.001	&	89.999	&	0.001	&	1540	\\
	5	&	8	&	90.000	&	89.999	&	0.001	&	89.999	&	0.001	&	846	\\
	5	&	9	&	90.000	&	89.999	&	0.001	&	89.999	&	0.001	&	388	\\
	5	&	10	&	90.000	&	90.000	&	0.000	&	89.999	&	0.001	&	44	\\
	5	&	11	&	82.365	&	82.300	&	0.065	&	81.937	&	0.429	&	5000	\\
	5	&	12	&	81.145	&	81.145	&	0.001	&	80.993	&	0.152	&	4695	\\
	5	&	13	&	79.207	&	79.129	&	0.078	&	78.858	&	0.349	&	5000	\\
	5	&	14	&	78.463	&	78.462	&	0.001	&	78.280	&	0.183	&	1541	\\
	5	&	15	&	78.463	&	78.462	&	0.001	&	77.477	&	0.986	&	1763	\\
	5	&	16	&	78.463	&	78.462	&	0.001	&	78.462	&	0.001	&	182	\\
	5	&	17	&	74.307	&	74.307	&	0.001	&	73.862	&	0.446	&	4147	\\
	5	&	18	&	74.008	&	74.007	&	0.001	&	73.363	&	0.645	&	3200	\\
	5	&	19	&	73.033	&	73.016	&	0.017	&	72.444	&	0.589	&	5000	\\
	5	&	20	&	72.579	&	72.579	&	0.001	&	72.476	&	0.104	&	4689	\\
	5	&	21	&	71.644	&	71.639	&	0.005	&	71.606	&	0.039	&	5000	\\
	5	&	22	&	69.207	&	68.683	&	0.524	&	68.026	&	1.181	&	5000	\\
	5	&	23	&	68.298	&	68.148	&	0.150	&	67.568	&	0.731	&	5000	\\
	5	&	24	&	68.023	&	68.018	&	0.006	&	67.127	&	0.896	&	5000	\\
	5	&	25	&	67.690	&	67.607	&	0.083	&	66.434	&	1.256	&	5000	\\
\end{supertabular}

\normalsize
\end{center}

\clearpage

\bibliographystyle{alpha}
\bibliography{packing}

\end{document}